\author{Johan Andersson\thanks{Email:johan.andersson@oru.se \, Address:Department of Mathematics, School of Science and Technology, {\"O}rebro University, {\"O}rebro, SE-701 82 Sweden. }}
\title{Universality of the Hurwitz zeta-function
on the half plane of absolute convergence}
\date{}
\theoremstyle{plain} 
\newtheorem{thm}{Theorem}  
\newtheorem{lem}{Lemma}
\theoremstyle{definition}
\newtheorem{defn}{Definition}
\newcommand{\C}{{\mathbb C}} 
\newcommand{\R}{{\mathbb R}}
\newcommand{\N}{{\mathbb N}}
\newcommand{\Z}{{\mathbb Z}}
\newcommand{\Q}{{\mathbb Q}}
\newcommand{\abs}[1]{{\left| {#1} \right|}} 
\newcommand{\p}[1]{{\left( {#1} \right)}} 
\renewcommand{\Re}{\operatorname{Re}}
\begin{document}

\maketitle        
\begin{abstract} 
   Let $K$ be a compact set with connected complement on the half-plane $\Re(s)>0$, and let $f$ be a continuous function on $K$ which is analytic in its interior. We prove that for any parameter $0<\alpha<1, \alpha \neq \frac 1 2$  then $f(s)$ may be uniformly approximated arbitrarily closely by $\zeta(1+iT+i\delta s,\alpha)$ on $K$ for some $T,\delta>0$, where $\zeta(s,\alpha)$ denote the Hurwitz zeta-function. This is the first known universality result that is also known to hold for the Hurwitz zeta-function with an algebraic irrational parameter.
\end{abstract}

\maketitle
\tableofcontents
\section{Introduction and main results}
In a recent paper \cite{Andersson1} we introduced a new idea on how to prove a new type of universality theorem for a Dirichlet series with an Euler-product. By using an elaboration of this idea we obtain joint universality results for Dirichlet  $L$-functions that implies the corresponding theorem for the Hurwitz zeta-function with a rational parameter. We also manage to apply the same general idea to prove universality results of the Hurwitz zeta-function   with an irrational parameter.  A great advantage with our approach is that in contrast to previous universality theorems it also works for algebraic irrational parameters. To prove universality (in the classical sense) for the Hurwitz zeta-function with an algebraic irrational parameter is a well-known open problem in the field. For a discussion of some of the difficulties, see \cite{Andersson4}, \cite[pp. 17-18]{Andersson5}. The only known results in this direction are some very recent results of  Sourmelidis-Steuding, see e.g. \cite[Theorem 2]{SoSte} where unfortunately to obtain  approximations, we do not only have to vary the imaginary shift $t$, but also the algebraic parameter $\alpha$.  For a zero-distribution result in this direction see  Garunk\v stis \cite{Garunkstis}. There are also some related limit distribution results in the literature \cite{LaSte1}, \cite{La}. Our main result is the following theorem.
   \begin{thm} \label{th} Let $0 < \alpha<1$, with  $\alpha \neq \frac 1 2$. Let
  $K \subset \C$ be a compact set with connected complement, where $K \subset \{s  \in \C:\Re(s)>0\}$ if $\alpha$ is algebraic irrational, and suppose that $\varepsilon>0$ and that $f$ is any continuous function on $K$ that is analytic  in the interior of $K$.  Then there exist some $\delta_0>0$  such that for any  $0 <\delta \leq \delta_0$ then
\[
 \liminf_{T \to \infty} \frac 1 T \mathop{\rm meas} \left \{t \in [0,T]:\max_{s \in K} \abs{\zeta(1+it+\delta s,\alpha)-f(s)}<\varepsilon \right \}>0. 
\]
\end{thm}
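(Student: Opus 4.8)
The plan is to run a Bagchi--Voronin type argument at the edge of the half-plane of absolute convergence; I will describe it for the principal case, $\alpha$ algebraic irrational (so $K\subset\{\Re(s)>0\}$), and comment on the rational and transcendental cases at the end. Fix $f$ and $\varepsilon>0$; since $K$ has connected complement, Mergelyan's theorem lets us take $f$ to be a polynomial. Put $c=\min_{s\in K}\Re(s)>0$ and $a_n(s)=(n+\alpha)^{-1-\delta s}$, so that for $\delta\le\delta_0$ with $\delta_0$ small,
\[
\zeta(1+it+\delta s,\alpha)=\sum_{n\ge 0}a_n(s)(n+\alpha)^{-it}=\sum_{n\le N}a_n(s)(n+\alpha)^{-it}+O\!\left(\frac{N^{-\delta c}}{\delta c}\right)
\]
uniformly on $K$ and in $t\ge 0$. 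A first lemma is the density of $\operatorname{span}_{\mathbb C}\{a_n\}$ in $A(K)$ (the space of functions continuous on $K$ and analytic inside, with the sup norm): a complex measure $\lambda$ on $K$ killing every $a_n$ makes $F(w):=\int_K e^{-\delta ws}\,d\lambda(s)$ an entire function of exponential type vanishing at every $\log(n+\alpha)$; since $\#\{n:\log(n+\alpha)\le x\}\asymp e^{x}$ outgrows the at most linear zero density of $F$, we get $F\equiv 0$, hence $\lambda$ annihilates all $s^k$ and so, by Mergelyan again, $\lambda=0$.

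The measure-theoretic step is now routine, because in this region $\zeta(1+it+\delta s,\alpha)$ is uniformly bounded by $\asymp(\delta c)^{-1}$, so tightness is free: as $t$ ranges over $[0,T]$, $\big((n+\alpha)^{-it}\big)_{n\le N}$ equidistributes, as $T\to\infty$, with respect to Haar measure on the closure $\mathbb H_N\subset\mathbb T^{N+1}$ of the one-parameter subgroup it generates, which by Kronecker's theorem is the annihilator of the lattice of integer relations among $\{\log(n+\alpha)\}_{n\le N}$. Everything then reduces to a \emph{reachability} statement: writing $\mathbb H$ for the closure of the orbit $t\mapsto\big((n+\alpha)^{-it}\big)_{n\ge 0}$ in $\mathbb T^{\mathbb N}$, one must show that
\[
\big\{\textstyle\sum_{n\ge0}a_n z_n:(z_n)\in\mathbb H\big\}=\overline{\{\zeta(1+\delta s+it,\alpha):t\in\mathbb R\}}
\]
contains the ball of radius $R(\delta)$ about $0$ in $A(K)$ for some $R(\delta)\to\infty$ as $\delta\to 0$. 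Granting this, fix $\delta_0$ also so small that $\|f\|_K<R(\delta_0)$; then for any $\delta\le\delta_0$, choose $N$ with the truncation error $<\varepsilon/3$, pick $(z_n)\in\mathbb H$ with $\|\sum a_nz_n-f\|_K<\varepsilon/3$, observe that a whole $\mathbb H_N$-neighbourhood of $(z_n)_{n\le N}$ still keeps $\|\sum_{n\le N}a_nz_n-f\|_K<2\varepsilon/3$, and conclude from equidistribution on $\mathbb H_N$ that $\{t\in[0,T]:\big((n+\alpha)^{-it}\big)_{n\le N}\text{ in that neighbourhood}\}$ has positive lower density; for each such $t$, $\max_{s\in K}|\zeta(1+it+\delta s,\alpha)-f(s)|<\varepsilon$.

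The reachability statement is the crux, and the place where the $\mathbb Q$-linear dependence of $\{\log(n+\alpha)\}$ --- which may genuinely occur for algebraic $\alpha$ (e.g.\ $\log\alpha+\log(1+\alpha)=0$ for $\alpha=\tfrac{\sqrt5-1}{2}$) and which is exactly what obstructs classical universality (cf.\ \cite{Andersson4}, \cite{Andersson5}) --- makes $\mathbb H$ a proper, possibly sparse subgroup of $\mathbb T^{\mathbb N}$. I expect proving it to be the main obstacle. The natural approach --- and presumably what the elaboration of the idea of \cite{Andersson1} supplies --- is: pick a maximal $\mathbb Q$-independent (``basis'') subset $B$ of the frequencies, whose torus coordinates are unconstrained on $\mathbb H$; express each remaining coordinate as an explicit power product of the basis ones; and apply a Bagchi-type rearrangement lemma to the free part $\{a_n:n\in B\}$ (legitimate since $\|a_n\|_K\to 0$ and $\operatorname{span}\{a_n:n\in B\}$ is still dense, by the entire-function argument applied to the zero set $\{\log(n+\alpha):n\in B\}$). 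The delicate points to check --- using that an integer relation among the $\log(n+\alpha)$ amounts to an $S$-unit relation in $\mathbb Q(\alpha)$ with $S$ controlled by height --- are that the dependent terms $\sum_{n\notin B}a_n z_n$ form a perturbation whose norm stays bounded as $\delta\to 0$ (grouping the dependent indices into dyadic blocks), and that $\sum_{n\in B}\|a_n\|_K$ nevertheless tends to $\infty$ as $\delta\to 0$, so that $R(\delta)\to\infty$ survives. Finally, for rational $\alpha=a/q$ one bypasses all of this by writing $\zeta(s,a/q)$ as a fixed combination of Dirichlet $L$-functions, whose Euler products have the $\mathbb Q$-independent frequencies $\{\log p\}$, and invoking their joint universality on $\Re(s)>1$ from \cite{Andersson1}; for $\alpha$ transcendental, $\{\log(n+\alpha)\}$ is automatically $\mathbb Q$-independent so $\mathbb H=\mathbb T^{\mathbb N}$ and the reachability lemma is immediate, while the portion of $K$ lying outside the half-plane of absolute convergence is covered by the classical mean-value machinery.
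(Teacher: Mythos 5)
Your framework (truncation on $\Re(s)>1$, density of $\operatorname{span}\{(n+\alpha)^{-1-\delta s}\}$ in $A(K)$, Kronecker--Weyl equidistribution on the closed subgroup $\mathbb H_N$) is sound and matches the standard outer layer of such arguments, but the statement you yourself identify as the crux --- the reachability of a large ball in $A(K)$ by sums $\sum_n a_n z_n$ with $(z_n)$ respecting all multiplicative relations among the $n+\alpha$ --- is exactly the content of the paper's Lemma \ref{LA}, and your proposal does not prove it. The paper's proof is not the ``bounded perturbation plus rearrangement over a free basis'' scheme you sketch: it is an explicit recursive construction \eqref{omegadef} of a completely multiplicative unimodular $\omega$, which at each free index chooses the phase of $\omega(n+\alpha)/(n+\alpha)$ so as to steer the running sum along the curve $x\mapsto\int_0^x g$, where $\int_0^B g(x)e^{-sx}dx$ is the Laplace--Mergelyan approximation of $f$ (Lemma \ref{LA3}); the forced (dependent) indices are simply accepted, and the construction still tracks the target because of Cassels' lemma (Lemma \ref{cassel}): in every short interval $(N,N+M_N]$ with $M_N=o(N)$ at least $51\%$ of the indices are free, each term has modulus $\asymp 1/n$, so the free majority can always out-steer the forced minority. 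This quantitative ``majority in short intervals'' input is the whole point of the algebraic case, and it is absent from your sketch.

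Moreover, the specific claims on which your sketched route relies are doubtful as stated. Cassels' bound leaves open that a positive proportion (up to $49\%$) of indices up to $\exp(B/\delta)$ are dependent, so $\sum_{n\notin B}\|a_n\|_K$ can be of size $\asymp 1/\delta$, not bounded as $\delta\to0$; and the values $z_n$ at dependent indices are not fixed data but are determined by the free coordinates through the relations, so they cannot be absorbed as a fixed perturbation --- some mechanism for controlling them along with the choices on $B$ is needed, and that mechanism is precisely the paper's steering argument. Two smaller gaps: in the rational case, quoting joint universality of Dirichlet $L$-functions on $\Re(s)>1$ is not enough, since on the $1$-line one can only approximate after adding large constants $C_k$ (Theorem \ref{th4}); the paper makes these cancel via $\sum_{\chi}\chi(p)=0$ in the identity $\zeta(s,p/q)=\frac{q^{s}}{\varphi(q)}\sum_\chi\overline{\chi(p)}L(s,\chi)$, and it additionally needs the hybrid condition $|q^{it}-1|<\varepsilon_2$ to tame the factor $q^{s+it}$ --- neither point appears in your reduction. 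In the transcendental case your appeal to ``classical mean-value machinery'' should be replaced by an actual limit theorem for $\zeta(s+it,\alpha)$ against a convergent target series on $\Re(s)>\frac12$ (the paper's Lemma \ref{LALA5}, after extending $\omega(n+\alpha)=(-1)^n$ beyond $N$ to force convergence).
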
 
The Voronin universality theorem for the Hurwitz zeta-function\footnote{Proved independently by Bagchi \cite{Bagchi} and Gonek \cite{Gonek}, generalizing the corresponding result for the Riemann zeta-function \cite{Voronin}.}  requires that $K \subset \{s \in \C: -\frac 1 2<\Re(s)<0\}$ in Theorem \ref{th} but allows us to choose $\delta=1$. The reason why our method works also for algebraic irrational numbers is because in our case we work on the half plane of absolute convergence which is easier to understand than the critical strip, and the approach of Cassels \cite{Cassels} 
applies. Since we may allow $f$ in Theorem \ref{th} to have zeros on the set $K$, by Rouche's theorem  an immediate consequence is  that the Hurwitz zeta-function $\zeta(s,\alpha)$ for parameters $0<\alpha<1,  \alpha \neq \frac 1 2$ contains infinitely many zeroes in each strip $1<\Re(s)<1+\delta$. Thus as a consequence of Theorem \ref{th} we obtain classical results of Davenport-Heilbronn \cite{DavHeil} and Cassels \cite{Cassels}.

We would like to remark that in order for  Theorem \ref{th} to be true, we must allow the scaling factor $\delta$, since Theorem \ref{th} is not true without it. This was proved in \cite[Corollary 3]{Andersson99} and in a more explicit form in \cite[Theorem 29]{Andersson6}
\begin{gather} \label{i2i}
  \delta^{7/{6\delta}} 10^{-9/\delta} \leq \inf_{0 <\alpha \leq 1}  \inf_T  \int_T^{T+\delta} \left| \zeta(1+it,\alpha) \right| dt, \qquad (0< \delta \leq 0.05).
\end{gather}
This follows since \eqref{i2i} does not allow the function $\zeta(1+it+iT,\alpha)$ to approximate the function $f(t)=0$  arbitrarily closely on any interval $[0,\delta]$ for fixed $\delta$.

In the case where the Hurwitz zeta-function has an Euler-product, for the parameters $\alpha=\frac 1 2$ and $\alpha=1$ our corresponding theorem will be somewhat weaker as we need to add some unspecified constant $C$. The following is a consequence of  
\cite[Theorem 1]{Andersson1}.
  \begin{thm} \label{th2} Suppose that $\alpha=\frac 1 2$ or $\alpha=1$. Let
  $K \subset \C$ be a compact set with connected complement, and suppose that $\varepsilon>0$ and that $f$ is any continuous function on $K$ that is analytic  in the interior of $K$.  Then there exist some $\delta_0, C_0>0$  such that for any $0<\delta \leq \delta_0$ and $|C|>C_0$ then
\[
 \liminf_{T \to \infty} \frac 1 T \mathop{\rm meas} \left \{t \in [0,T]:\max_{s \in K} \abs{\zeta(1+it+\delta s,\alpha)+C-f(s)}<\varepsilon \right \}>0. 
\]
\end{thm}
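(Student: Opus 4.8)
\noindent\emph{Proof plan.} Both cases will be deduced from \cite[Theorem~1]{Andersson1}, which is the corresponding near-the-$1$-line universality statement --- with the unavoidable additive constant $C$ --- for a Dirichlet series possessing an Euler product. The case $\alpha=1$ is immediate: $\zeta(s,1)=\zeta(s)$ is itself such a series, so Theorem~\ref{th2} for $\alpha=1$ is exactly \cite[Theorem~1]{Andersson1} applied to $\zeta$, with no modification.

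For $\alpha=\tfrac{1}{2}$ I would start from the elementary identity
\[
 \zeta\bigl(s,\tfrac{1}{2}\bigr)=\sum_{n\ge 0}\bigl(n+\tfrac{1}{2}\bigr)^{-s}=2^{s}\sum_{\substack{m\ge 1\\ m\ \textrm{odd}}} m^{-s}=2^{s}\bigl(1-2^{-s}\bigr)\zeta(s)=\bigl(2^{s}-1\bigr)\zeta(s),
\]
which displays $\zeta(s,\tfrac{1}{2})$ as $2^{s}$ times the Dirichlet $L$-function of the principal character modulo~$2$, i.e.\ as a Dirichlet series with an Euler product multiplied by the entire factor $\varphi(s):=2^{s}-1$. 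Setting $M:=\max_{s\in K}\abs{\Re(s)}$ and restricting to $\delta\le\delta_{0}$ with $\delta_{0}M<1$, one has $\abs{2^{1+it+\delta s}}=2^{1+\delta\Re(s)}>1$ for $s\in K$, so $\varphi(1+it+\delta s)$ is analytic, bounded and bounded away from $0$ on $K$, uniformly in $t$ and in $\delta$, and moreover $\varphi(1+it+\delta s)=(2^{1+it}-1)+O_{K}(\delta)$ there. Dividing through by this factor, the event $\max_{s\in K}\abs{\zeta(1+it+\delta s,\tfrac{1}{2})+C-f(s)}<\varepsilon$ becomes $\max_{s\in K}\abs{\zeta(1+it+\delta s)-g_{t,C}(s)}<\varepsilon'$ for a comparable $\varepsilon'$, where $g_{t,C}(s):=\bigl(f(s)-C\bigr)/\varphi(1+it+\delta s)$ is continuous on $K$, analytic in its interior, and --- once $\abs{C}>C_{0}$ --- non-vanishing there. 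One then applies \cite[Theorem~1]{Andersson1} for $\zeta$ to this target; equivalently, passing to logarithms, it suffices to produce a positive-density set of $t$ on which $\log\zeta(1+it+\delta s)$ stays within a prescribed distance, in the sup norm on $K$, of $\log\bigl(f(s)-C\bigr)-\log\varphi(1+it+\delta s)$, whose right-hand side is a large additive constant (real part of order $\log\abs{C}$) plus an analytic perturbation of sup-norm $O\bigl(\delta+\abs{C}^{-1}\max_{s\in K}\abs{f(s)}\bigr)$ --- precisely the shape the proof of \cite[Theorem~1]{Andersson1} is built to realize.

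The one genuinely delicate point, and the reason the $\alpha=\tfrac{1}{2}$ case is more than a formality, is this reduction: because the additive constant $C$ does \emph{not} commute with the multiplicative factor $2^{s}$, one cannot simply ``absorb $\varphi$ into $C$ and $f$'' at the level of the additive statement --- the function $-C/\varphi(1+it+\delta s)$ differs from the constant $-C/(2^{1+it}-1)$ by $O_{K}(\delta\abs{C})$, which is not small when $\abs{C}$ is large. So one must instead invoke a slightly more flexible form of \cite[Theorem~1]{Andersson1} --- permitting targets of the form (large function)$\times$(small perturbation) rather than only (large constant)$+$(bounded function) --- or re-run its proof with the mildly $t$-dependent target $g_{t,C}$, checking that the resulting $\delta_{0}$ and $C_{0}$ depend only on $K$, $\varepsilon$, $\max_{s\in K}\abs{f(s)}$ and the uniform bounds on $\varphi^{\pm 1}$, and in particular not on $t$. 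With that uniformity in hand the positive-density conclusion for $\zeta(1+it+\delta s,\tfrac{1}{2})$ follows; the remaining ingredients --- the Kronecker/Weyl equidistribution input, Rouch\'e's theorem for the non-vanishing of the approximant, and the passage to the $\liminf$ --- are identical to the $\alpha=1$ case and are supplied by \cite[Theorem~1]{Andersson1} as it stands.
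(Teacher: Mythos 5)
Your $\alpha=1$ case matches the paper exactly, and for $\alpha=\tfrac12$ you start from the same identity $\zeta(s,\tfrac12)=(2^{s}-1)\zeta(s)$; you also correctly locate the genuine difficulty, namely that the additive constant does not commute with the $t$-dependent factor $2^{1+it+\delta s}-1$. But at precisely that point your argument stops being a proof: you divide by $\varphi(1+it+\delta s)$ and are left with a target $g_{t,C}$ that depends on $t$, and you then appeal to ``a slightly more flexible form of \cite[Theorem 1]{Andersson1}'' or to ``re-running its proof,'' without supplying either. \cite[Theorem 1]{Andersson1} as it stands only approximates a fixed target of the form (large constant)$+$(bounded function); it gives no control whatsoever on the value of $2^{it}$ at the good $t$'s, so the ``constant part'' $-C/(2\cdot 2^{it}2^{\delta s}-1)$ of your target rotates with $t$ in an uncontrolled way, and the uniformity check you defer to is exactly the nontrivial content that is missing.

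The paper's resolution, which your proposal does not contain, is \emph{hybrid} universality: one proves (Lemma \ref{Le2}, Theorems \ref{th3} and \ref{th4}) that the values $p^{-it}$ for small primes can be prescribed \emph{simultaneously} with the approximation of $L(1+it+\delta s,\chi)$ by a shifted target. Taking the principal character mod $2$, so that $\zeta(s,\tfrac12)=2^{s}L(s,\chi_0)$, and choosing $a_2=-1$ (equivalently imposing $\abs{2^{it}+1}<\varepsilon$), the troublesome factor becomes $2^{1+it+\delta s}=-2+O(\varepsilon+\delta)$ on $K$, essentially a constant independent of $t$; Theorem \ref{th4} with $f_1=-f/2$ and $C_1=-C/2$ then yields the statement, so Theorem \ref{th2} is a special case of Theorem \ref{th4} rather than of \cite[Theorem 1]{Andersson1} alone. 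In short, the ``more flexible form'' you invoke is the hybrid theorem that this paper constructs (via the choice $a_2=-1$ in the prime-block construction of Lemma \ref{Le2}); without it, or without actually carrying out the re-run you sketch, the $\alpha=\tfrac12$ case is not proved. (Your observation that a residual error of size $O(\delta\abs{C})$ must be dealt with is a fair one --- it is the reason the reduction is delicate --- but the cure is the pinning of $2^{it}$, not an unspecified strengthening of the $\zeta$-only result.)
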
 
\begin{proof} The case $\alpha=1$ is exactly \cite[Theorem 1]{Andersson1}. For  $\alpha=\frac 1 2$ we have that
\begin{gather*}
  \zeta \p{s,\frac 1 2}=(2^s-1) \zeta(s).
\end{gather*} and it follows from a corresponding hybrid universality version of     \cite[Theorem 1]{Andersson1} where we may also assume that $|2^{it}+1|<\varepsilon$. This hybrid universality result follows easily  by the same proof method as \cite[Theorem 1]{Andersson1} since we may choose $a_2=-1$ in \cite[Lemma 2]{Andersson1}. Indeed we will do the details in this paper and Theorem \ref{th2} is a special case of Theorem \ref{th4} which is stated on the next page for a single Dirichlet $L$-function with the principal character mod $2$, and with $a_2=-1$. For details on how to prove an even  more general hybrid universality result, see the proof of \cite[Theorem 3]{Andersson3}.
\end{proof}
Next we state a joint hybrid universality theorem generalizing  \cite[Theorem 2]{Andersson1} which implies Theorem  \ref{th} for the case of a rational parameter $\alpha$.  
\begin{thm} \label{th3} Let $|a_p|=1$ be given for the primes $p \leq N$. Let  $\chi_1,\ldots,\chi_n$ be pairwise non-equivalent Dirichlet characters,  let $K \subset  \C$ be a compact set and let $f_1,\ldots,f_n$ be functions such that
 \begin{gather*}
   f_k(s)= C_k+\int_0^{\infty} g_k(x)e^{-sx} dx,  \qquad (s \in K) 
 \\ \intertext{where}
    \sum_{k=1}^n \abs{xg_k(x)} \leq 1, \qquad (x \geq 0). 
       \end{gather*}
       Then for any  $\varepsilon>0$ there exist some  $\delta_0>0$  such that for each $0<\delta\leq \delta_0$  then
       \begin{gather*}
            \liminf_{T \to \infty} \frac 1 T \mathop{\rm meas} \left \{t \in [0,T]:\begin{matrix*}[l]   \max_{p \leq N} \abs{p^{-it}-a_p}<\varepsilon, \\ 
            \max_ {1 \leq k \leq n} \max_{s \in K} \abs{\log L(1+it+\delta s,\chi_k)-f_k(s)}<\varepsilon  \end{matrix*} 
            \right \}>0. 
 \end{gather*} \end{thm}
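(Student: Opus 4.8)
The plan is to reduce the statement to the key input from \cite{Andersson1} — namely the universality result for a single Dirichlet $L$-function with an Euler product, upgraded to the hybrid form that also prescribes the values $p^{-it}\approx a_p$ at finitely many primes — and then to bootstrap from one $L$-function to the joint statement for $\chi_1,\dots,\chi_n$. First I would record the mechanism behind \cite[Theorem 1]{Andersson1}: writing $\log L(s,\chi_k)=\sum_p \sum_{m\ge 1} \frac{\chi_k(p)^m}{m p^{ms}}$, on the half-plane $\Re(s)>1$ this is an absolutely convergent Dirichlet series, and after the substitution $s\mapsto 1+it+\delta s$ the $t$-average of $p^{-it}$ over primes behaves, by the classical argument of Cassels-type (almost periodicity / equidistribution of $\{(t\log p)/2\pi\}_p$ on the infinite-dimensional torus, which is where the condition $p\le N$ on the prescribed values $a_p$ and the non-equivalence of the $\chi_k$ will be used), like an independent family of rotations. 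The representation $f_k(s)=C_k+\int_0^\infty g_k(x)e^{-sx}\,dx$ with $\sum_k |xg_k(x)|\le 1$ is exactly the shape in which a truncated Dirichlet polynomial $\sum_p c_p p^{-s}$ can realize $f_k$ after rescaling by $\delta$: the factor $e^{-\delta s x}$ spreads the Dirichlet frequencies $\log p$ so that, for $\delta$ small, the relevant $p$ range over a window where the Cassels/Pechersky rearrangement argument applies and the normalization $\sum_k|xg_k(x)|\le1$ controls the total $\ell^1$-mass of the coefficients.

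The key steps, in order, would be: (1) choose $N'\ge N$ large and a finite set of primes $P=\{p\le N'\}$, and use the hybrid universality for $L(\cdot,\chi_1)$ from \cite{Andersson1} (the version with prescribed $p^{-it}\approx a_p$ for $p\le N$, obtainable "by the same proof method" as noted in the proof of Theorem~\ref{th2}) as the template; (2) pass from a single character to $n$ characters by exploiting that for pairwise non-equivalent $\chi_k$ the corresponding families of "prime coordinates" are still jointly equidistributed — this is the joint-universality step, and it is precisely \cite[Theorem 2]{Andersson1} that is being generalized, so I would either cite it directly or repeat its proof with the extra hybrid constraint carried along; (3) realize each target $f_k$ as a limit of Dirichlet polynomials supported on primes in $P$, using the integral representation and the constraint $\sum_k |xg_k(x)|\le1$ to bound the approximation uniformly in $k$ and uniformly on the compact set $K$ after the $\delta$-rescaling; (4) apply the Cassels/Pechersky-type rearrangement theorem on the half-plane of absolute convergence to conclude that the set of $t$ for which all these approximations hold simultaneously has positive lower density; (5) finally choose $\delta_0$ so small that for every $0<\delta\le\delta_0$ the rescaled frequencies $\delta\log p$ fall in a regime where steps (3)-(4) go through with error $<\varepsilon$, and absorb the constants $C_k$ (these correspond to the $m=1$, $p\to\infty$ "tail" which on the half-plane of absolute convergence is a genuine free parameter, matching the role of the unspecified constant $C$ in Theorem~\ref{th2} — here it is prescribed by $C_k$ inside $f_k$, so one must check the argument still delivers it, which it does since the constant term is handled separately from the oscillating part).

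The main obstacle I expect is step (2)-(4) combined: establishing the \emph{joint} hybrid statement, i.e.\ that the prescribed prime values $p^{-it}\approx a_p$ (for $p\le N$) can be imposed \emph{simultaneously} with the independent approximation of each $\log L(1+it+\delta s,\chi_k)$, for pairwise non-equivalent characters. The non-equivalence is essential because two equivalent characters give the same $L$-function up to finitely many Euler factors, so their approximations cannot be prescribed independently; the proof must show that non-equivalence is exactly the right condition for the relevant family of rotations on the torus to remain "independent enough" (in the sense of the Cassels / Pechersky rearrangement argument), and that the finitely many prescribed coordinates $a_p$ do not obstruct this — which is plausible since fixing finitely many coordinates of an equidistributed sequence on a compact group still leaves positive measure. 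Handling the uniformity of all these approximations over the compact set $K$ (as opposed to a single point) is routine once the Dirichlet-polynomial approximation in step (3) is done with the $e^{-\delta s x}$ kernel, since that kernel is analytic and bounded on any fixed $K$ in $\Re(s)>-c$ for $\delta$ small. I would expect the bulk of the real work to be in carefully stating and citing the hybrid version of \cite[Theorems 1 and 2]{Andersson1}, and in the bookkeeping that turns the integral representation of $f_k$ into an explicit Dirichlet polynomial whose $\delta$-rescaled version is $\varepsilon$-close on $K$.
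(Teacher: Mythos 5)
Your outline reproduces the outer architecture of the paper's argument --- approximate each $f_k$ by a prime sum after the $\delta$-rescaling, impose $p^{-it}\approx a_p$ for $p\le N$, get positive lower density from a standard equidistribution/limit theorem, and take $\delta_0$ small --- which is indeed how the paper reduces Theorem \ref{th3} to Lemma \ref{Le2} plus Lemma \ref{LALA} (change of variables $z=1+\delta s$ and the triangle inequality). But the heart of the theorem is exactly the step you leave open. In your step (2) you propose to ``cite directly or repeat with the extra hybrid constraint'' the joint result of \cite[Theorem 2]{Andersson1}; since Theorem \ref{th3} \emph{is} that generalization, this is circular rather than a proof. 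In your step (4) you appeal to a ``Cassels/Pechersky-type rearrangement theorem on the half-plane of absolute convergence.'' No such theorem exists in this regime: Pechersky's rearrangement theorem is the tool of the classical proof inside the strip $\frac 1 2<\Re(s)<1$, and the paper introduces Lemma \ref{Le2} precisely to \emph{replace} it, because unrestricted approximation fails on the $1$-line (compare the remark around \eqref{i2i}: even $f\equiv 0$ cannot be approximated there), which is also why the admissible targets are restricted to $f_k(s)=C_k+\int_0^\infty g_k(x)e^{-sx}dx$ with $\sum_k\abs{xg_k(x)}\le 1$. Moreover, Pechersky's theorem concerns the choice of the coefficients $\omega(p)$, not the density of $t$; the density statement is the routine part (Lemma \ref{LALA}).

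What is actually needed, and what the paper supplies in Section \ref{sec4}, is an explicit construction of unimodular $\omega(p)$ with $\omega(p)=a_p$ for $p\le N$ such that all $n$ series $-\sum_p\log\p{1-\omega(p)\chi_k(p)p^{-1-\delta s}}$ simultaneously approximate the targets on $K$. This is done via the prime number theorem in arithmetic progressions and character orthogonality: on sub-ranges of primes one takes $\omega(p)=\overline{\chi_k(p)}$ times a fixed phase so that the sums $\sum\omega(p)\chi_j(p)/p$ hit prescribed values $\delta_{k,j}b_k$ (Lemma \ref{lem5}; this is where the pairwise non-equivalence of the characters and the normalization $\sum_k\abs{b_k}\le 1$ are used --- not, as you suggest, mainly in the torus-equidistribution step), the constants $C_k$ are produced by the Euler factors over an initial segment of primes via Lemma \ref{lem6} (they are not a free ``tail'' parameter), and the integral $\int g_k(x)e^{-sx}dx$ is matched by a Riemann sum realized by primes $p$ with $\delta\log p$ in short intervals, i.e.\ primes in the $\delta$-dependent ranges $\exp(B/(M\delta))<p\le\exp(B(M+1)/(M\delta))$ --- not a fixed finite set $p\le N'$ as in your step (3). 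Without this construction (or a genuine substitute for it), your proposal does not close the argument.
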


   The condition on the functions $f_k$ in Theorem \ref{th3} is somewhat restricted and similarly to \cite[Theorem 1]{Andersson1} we may remove this condition if we are also allowed to add some sufficiently large constant.
\begin{thm} \label{th4}  Let $|a_p|=1$ be given for the primes $p \leq N$. Let $\chi_1,\ldots,\chi_n$ be pairwise non-equivalent Dirichlet characters and, let $K \subset  \C$ be a compact set with connected complement and let $f_1,\ldots,f_n$ be continuous  functions on $K$ that are analytic in its interior.  Then for any $\varepsilon>0$ there exists some $C_0,\delta_0>0$, such that for any $|C_k| \geq C_0$ and  $0<\delta\leq \delta_0$ then
       \begin{gather*}
            \liminf_{T \to \infty} \frac 1 T \mathop{\rm meas} \left \{t \in [0,T]:\begin{matrix*}[l]   \max_{p \leq N} \abs{p^{-it}-a_p}<\varepsilon, \\  
            \max_ {1 \leq k \leq n} \max_{s \in K} \abs{ L(1+it+\delta s,\chi_k)+C_k-f_k(s)}<\varepsilon
            \end{matrix*} 
             \right \}>0. 
 \end{gather*} \end{thm}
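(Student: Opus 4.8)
The plan is to deduce Theorem~\ref{th4} from the restricted statement Theorem~\ref{th3} by passing to logarithms, exactly as \cite[Theorem~1]{Andersson1} is obtained from its restricted form in the single Dirichlet series case. Fix $\varepsilon>0$ together with $K$, the $f_k$, the $\chi_k$ and the $a_p$, and fix a small auxiliary parameter $\eta\in(0,1)$, to be pinned down at the very end so that a terminal error of size $O(\eta)$ is $<\varepsilon$. Put $M=1+\max_k\|f_k\|_K$. For $|C_k|\ge 2M$ the function $f_k(s)-C_k$ omits the value $0$ and takes values in a half-plane avoiding the origin, so a branch $\phi_k(s):=\log\bigl(1-f_k(s)/C_k\bigr)$ is continuous on $K$, holomorphic in its interior, and satisfies $\|\phi_k\|_K=O(1/|C_k|)$; then $F_k(s):=\log(-C_k)+\phi_k(s)$ is a continuous branch of $\log\bigl(f_k(s)-C_k\bigr)$ on $K$. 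The goal is to approximate each $F_k$ on $K$, to within $O(\eta/|C_k|)$, by a function of the special shape permitted by Theorem~\ref{th3}, apply that theorem, and exponentiate.

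For the approximation I would first record the elementary fact that finite exponential sums $\sum_j b_j e^{-\lambda_j s}$ with $\lambda_j>0$ lie densely, uniformly on $K$, in the space of functions continuous on $K$ and holomorphic in its interior: the closed linear span $\mathcal S$ of $\{e^{-\lambda s}:\lambda>0\}$ is stable under differentiation in $\lambda$ (the relevant difference quotients converge uniformly on the compact set $K$), hence contains every $s^m e^{-\lambda s}$, hence every monomial $s^m$ on letting $\lambda\to0^+$; Mergelyan's theorem---the sole place where the hypothesis that $K$ has connected complement enters---then shows that polynomials, and therefore $\mathcal S$, exhaust the whole space. Fix once and for all exponential sums $P_k(s)=\sum_j b_{k,j}e^{-\lambda_{k,j}s}$, $\lambda_{k,j}>0$, with $\|f_k-P_k\|_K\le\eta$; since $\log(1-f_k/C_k)=-f_k/C_k+O(1/|C_k|^2)$ this gives $\|\phi_k+P_k/C_k\|_K=O(\eta/|C_k|)$ once $|C_k|\ge C_0$ with $C_0$ large in terms of $\eta$ and $\max_k\|f_k\|_K$. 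Now replace each point mass at $\lambda_{k,j}$ by a smooth probability density $\rho_{k,j}$ of width $w$ supported in $(0,\infty)$ and set $g_k:=-C_k^{-1}\sum_j b_{k,j}\rho_{k,j}$; then $\bigl\|\int_0^\infty g_k(x)e^{-sx}\,dx+P_k/C_k\bigr\|_K$ is $O(w/|C_k|)$, hence $O(\eta/|C_k|)$ for $w\le\eta$ (a choice depending only on $\eta$, $K$ and the $b_{k,j}$), while $\sum_k|x g_k(x)|\le (C_\rho/(wC_0))\sum_{k,j}\lambda_{k,j}|b_{k,j}|\le 1$ provided $C_0$ is also taken large in terms of $w$ and the $b_{k,j}$; since $w$ was fixed before $C_0$ there is no circularity. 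Consequently, for every $|C_k|\ge C_0$ the functions $G_k(s):=\log(-C_k)+\int_0^\infty g_k(x)e^{-sx}\,dx$ meet the hypotheses of Theorem~\ref{th3} and satisfy $\|G_k-F_k\|_K=O(\eta/|C_k|)$.

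Finally, apply Theorem~\ref{th3} to the characters $\chi_k$, the numbers $a_p$, the targets $G_k$, with its accuracy parameter taken to be $\eta/|C_k|$. It yields a $\delta_0>0$ such that for every $0<\delta\le\delta_0$ the set of $t\in[0,T]$ with $\max_{p\le N}|p^{-it}-a_p|<\eta/|C_k|<\varepsilon$ and $\max_k\max_{s\in K}|\log L(1+it+\delta s,\chi_k)-G_k(s)|<\eta/|C_k|$ has positive lower density; for such $t$, combining with the previous paragraph gives $\log L(1+it+\delta s,\chi_k)=\log(f_k(s)-C_k)+E_k(s)$ with $\|E_k\|_K=O(\eta/|C_k|)$, and exponentiating, $L(1+it+\delta s,\chi_k)+C_k-f_k(s)=(f_k(s)-C_k)\bigl(e^{E_k(s)}-1\bigr)$, whose modulus is $O(|C_k|)\cdot O(\eta/|C_k|)=O(\eta)<\varepsilon$ once $\eta$ has been fixed small. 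The single delicate point I anticipate is the uniformity of $\delta_0$ over all $|C_k|\ge C_0$: a priori the $\delta_0$ furnished by Theorem~\ref{th3} depends on the target function, but here that target, $\int_0^\infty g_k(x)e^{-sx}\,dx=C_k^{-1}\int_0^\infty\widetilde g_k(x)e^{-sx}\,dx$ with $\widetilde g_k$ independent of $C_k$, and the accuracy to which Theorem~\ref{th3} is invoked are both proportional to $1/C_k$, so the admissible scale $\delta_0$---governed by the ratio of target size to accuracy---is $C_k$-independent, precisely as in the single-series argument underlying \cite[Theorem~1]{Andersson1}.
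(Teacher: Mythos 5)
Your proposal is correct and follows essentially the same route as the paper: write $\log(f_k(s)-C_k)=\log(-C_k)+\log\p{1-f_k(s)/C_k}\approx\log(-C_k)-f_k(s)/C_k$, approximate $f_k$ by a Laplace transform, divide by $C_k$ so that the hypothesis $\sum_k\abs{xg_k(x)}\leq 1$ of Theorem~\ref{th3} is met once $|C_k|$ is large, apply Theorem~\ref{th3} with accuracy of order $1/|C_k|$, and exponentiate --- the only real difference being that you rebuild the Laplace-transform approximation from Mergelyan's theorem plus mollified exponential sums where the paper simply invokes Lemma~\ref{LA3}, and that you carry out the final exponentiation explicitly where the paper leaves it implicit. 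The uniformity of $\delta_0$ over all $|C_k|\geq C_0$ that you flag at the end is likewise left unaddressed in the paper's own proof (which also applies Theorem~\ref{th3} with $C_k$-dependent targets and accuracy), so it is not a gap relative to the paper's argument.
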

 Finally, for the Lerch zeta-function
 $$
  L(\lambda,\alpha,s)=\sum_{k=0}^\infty \frac{\exp\p{2 \pi \lambda i k}}{(k+\alpha)^s}
 $$
  we obtain a result corresponding to Theorem \ref{th}. For simplicity we state it only of $\alpha$ is irrational. In general we can also handle the case of both $\alpha$ and $\lambda$ rational by writing the Lerch zeta-function as a combination of Dirichlet $L$-functions see \cite[Section 6.2] {GaLa}. However, in the same way as in the classical setting, we do not know how to prove  universality for the case when $\alpha$ is rational and $\lambda$ is irrational.
   \begin{thm} \label{th5} Let $0<\alpha \not \in \Q$. Let
  $K \subset \{s  \in \C:\Re(s)>0\}$ be a compact set with connected complement and suppose that $\varepsilon>0$ and that $f$ is any continuous function on $K$ that is analytic  in the interior of $K$.  Then there exist some $\delta_0>0$  such that for any  $0 <\delta \leq \delta_0$ then
\[
 \liminf_{T \to \infty} \frac 1 T \mathop{\rm meas} \left \{t \in [0,T]:\max_{s \in K} \abs{L(\lambda,\alpha,1+it+\delta s)-f(s)}<\varepsilon \right \}>0. 
\]
\end{thm}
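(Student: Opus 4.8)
The plan is to carry out, for the Lerch zeta-function, the Euler-product-free argument in the spirit of Cassels \cite{Cassels} that underlies Theorem \ref{th} when $\alpha$ is irrational; the only genuinely new feature is the presence of the unimodular coefficients $e^{2\pi i\lambda k}$, and these turn out to cost nothing. Put $\sigma_0=\min_{s\in K}\Re(s)>0$. The threshold $\delta_0$ is fixed at the end; for $0<\delta\le\delta_0$ and $s\in K$ one has $\Re(1+it+\delta s)=1+\delta\Re(s)\ge1+\delta\sigma_0>1$, so the defining Dirichlet series for $L(\lambda,\alpha,1+it+\delta s)$ converges absolutely and uniformly on $K$. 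Truncating at a level $M=M(\delta,\varepsilon)$ leaves a tail below $\varepsilon/3$, so it suffices to show that
\[
 P_t(s)=\sum_{k=0}^{M}e^{2\pi i\lambda k}\,(k+\alpha)^{-1-it}\,e^{-\delta s\log(k+\alpha)}
\]
approximates $f$ uniformly on $K$ to within $\varepsilon/2$ for a set of $t$ of positive lower density.

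For the approximation step I would follow the strategy behind Theorem \ref{th}. After a Mergelyan reduction one may assume $f$ extends analytically past $K$. The key observation is that the scaled frequencies $\delta\log(k+\alpha)$, $0\le k\le M$, become densely packed as $\delta\to0$ (consecutive ones differ by an amount of order $\delta/k$), so after grouping the indices into short consecutive blocks the function $e^{-\delta s\log(k+\alpha)}$ is nearly constant on $K$ throughout each block; by choosing the phases $(k+\alpha)^{-it}$ suitably one can then make the sum of $P_t$ over a block equal any prescribed value of modulus up to roughly $\sum_{k\in\text{block}}(k+\alpha)^{-1}$. One matches these block-sums against an integral representation of $f$; the bookkeeping shows this succeeds once $\delta$ is small enough in terms of $f$ and $K$, which is precisely why the scaling $\delta$ cannot be dispensed with --- compare the hypothesis $\sum_k\abs{xg_k(x)}\le1$ of Theorem \ref{th3} and the lower bound \eqref{i2i}. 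The unimodular factors $e^{2\pi i\lambda k}$ merely translate the phases that have to be produced, so the construction is identical to the Hurwitz case $\lambda=0$.

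It remains to choose $t$, and here I would invoke Weyl (Kronecker--Weyl) equidistribution: as $t$ runs through $[0,T]$ the point $\big(t\log(k+\alpha)\bmod2\pi\big)_{0\le k\le M}$ equidistributes in $(\R/2\pi\Z)^{M+1}$, so the set of $t\in[0,T]$ for which the required phases are matched to the needed accuracy has measure at least a positive constant times $T$ for all large $T$; letting $T\to\infty$ yields the asserted positive lower density. This equidistribution needs that $\log\alpha,\log(1+\alpha),\dots,\log(M+\alpha)$ be linearly independent over $\Q$, equivalently that $\prod_{k=0}^{M}(k+\alpha)^{n_k}=1$ with $n_k\in\Z$ forces $n_0=\dots=n_M=0$. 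This is the main obstacle. When $\alpha$ is transcendental it is immediate, the polynomials $X+k$ being pairwise non-associate irreducibles of $\Q[X]$. When $\alpha$ is algebraic irrational one appeals to the method of Cassels \cite{Cassels}: the indices $k\le M$ for which $\log(k+\alpha)$ lies in the $\Q$-span of the preceding ones form a negligible proportion of $\{0,\dots,M\}$, so one discards them, the remaining indices are $\Q$-linearly independent, and a slightly coarser blocking suffices in the approximation step above. Combining the three steps and letting $T\to\infty$ proves the theorem for every $0<\delta\le\delta_0$.
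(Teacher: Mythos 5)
Your overall route is the same as the paper's: truncate in the half-plane of absolute convergence, represent $f$ by a Mergelyan/Laplace-transform approximation (the paper's Lemma \ref{LA3}), prescribe the phases $(k+\alpha)^{-it}$ in short blocks where $e^{-\delta s\log(k+\alpha)}$ is nearly constant, treat the factors $e^{2\pi i\lambda k}$ as a harmless translation of the target phases (exactly how the paper modifies \eqref{omegadef}), and realize the prescribed phases for a positive density of $t$ by Weyl/Kronecker applied to a maximal $\Q$-linearly independent subfamily of $\{\log(k+\alpha)\}$. The paper proves Theorem \ref{th5} by running the algebraic-irrational template of Theorem \ref{th} for every irrational $\alpha$, with Lemma \ref{LA} adapted to carry the extra phase; your transcendental case coincides with this and is fine.

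In the algebraic irrational case, however, there is a genuine gap. You claim that the indices $k\le M$ with $\log(k+\alpha)\in\operatorname{Span}_\Q\{\log(j+\alpha):j<k\}$ form a \emph{negligible} proportion and may simply be discarded. What Cassels' method gives (Lemma \ref{cassel}) is only that at least $51\%$ of the indices in suitable short intervals are independent; a dependent proportion as large as $49\%$ is not excluded, so negligibility is an unproven (and, as far as is known, open) assertion. Moreover, even granting it, discarding does not close the argument: once $t$ is chosen, the dependent terms still sit inside $P_t(s)$ with phases forced by the multiplicative relations, and the only estimate discarding provides is (dependent proportion) times $\sum_{k\le M}(k+\alpha)^{-1-\delta\sigma_0}\asymp\delta^{-1}$, which is not $<\varepsilon$ unless the proportion were $o(\delta)$ --- far beyond anything available. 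The paper's resolution is precisely the point you are missing: the target coefficients are required to be \emph{completely multiplicative} unimodular in the sense of \eqref{cond2}--\eqref{cond3}, so that approximating $(n_j+\alpha)^{-it}$ on a basis automatically produces the correct phases at the dependent indices as well, and in the construction of Lemma \ref{LA} the uncontrolled drift caused by the dependent indices is absorbed by the recursive steering at the free indices, which works because the $51\%$ majority leaves a strictly positive steering margin over the at most $49\%$ adversarial terms. Without this mechanism (or a genuinely new density theorem for the set $A$ of Lemma \ref{cassel}), your treatment of algebraic irrational $\alpha$ does not go through.
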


 \section{Proof of joint universality theorems}
 
 Let us first prove our joint universality results, Theorem \ref{th3} and Theorem \ref{th4}.  The following Lemma (for its proof, see section \ref{sec4}) which is a joint universality variant of \cite[Lemma 1]{Andersson1} replaces the Pechersky rearrangement theorem in the classical universality proof.
 \begin{lem}
 \label{Le2}
 For any  $|a_p|=1$ for $p \leq N$,   $\varepsilon>0$, compact set $K$,  characters $\chi_1,\ldots,\chi_n$  and functions  $f_1,\ldots,f_n$,  satisfying the conditions of Theorem \ref{th3}  there exists some $\delta_0>0$ such that for any $0<\delta \leq \delta_0$  there are unimodular complex numbers $|\omega(p)|=1$   such that $\omega(p)=a_p$ for $p \leq N$ and 
  \begin{gather*}
     h_k(s)=
-\sum_{p} \log \p{1-\frac{\chi_k(p) \omega(p)} {p^{1+\delta s}}},   \\ \intertext{are convergent to analytic functions $h_k$ for $\Re(s)>\frac 1 2$ and such that}
     \max_{1\leq k \leq n} \max_{s \in K} \abs{h_k(1+\delta s)-f_k(s) }<\varepsilon.
  \end{gather*}
 \end{lem}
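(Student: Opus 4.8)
The plan is to carry out the rearrangement argument of \cite[Lemma 1]{Andersson1} in the $n$-fold joint setting, the essential new feature being that a single unimodular sequence $\omega(p)$ must be steered simultaneously for all $n$ characters. Writing $h_k(w)=-\sum_p\log\p{1-\chi_k(p)\omega(p)p^{-w}}$ for the variable $w$ of $h_k$ and substituting $w=1+\delta s$, the hypothesis on $f_k$ becomes, for $s\in K$,
\[
  f_k(s)-C_k=\int_0^\infty g_k(x)e^{-sx}\,dx=\int_0^\infty\widetilde g_k(y)e^{-wy}\,dy,\qquad \widetilde g_k(y):=\delta e^{y}g_k(\delta y),
\]
and one checks $\sum_k\abs{y\widetilde g_k(y)}=e^{y}\sum_k\abs{(\delta y)g_k(\delta y)}\le e^{y}$ from the budget hypothesis. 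Since $K$ is bounded we may fix $\delta_0$ so small that $\Re w>\tfrac12$ throughout the compact set $W:=\{1+\delta s:s\in K\}$; it then suffices to choose $\omega(p)$ (with the prescribed values $a_p$ for $p\le N$) so that $h_k$ is within $\varepsilon$ of $C_k+\int_0^\infty\widetilde g_k(y)e^{-wy}\,dy$ uniformly on $W$, and so that the defining series converges to an analytic function on $\Re w>\tfrac12$.

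The core is a discretisation. In the $\log p$ variable the primes have density $\sim e^{y}/y\,dy$ by the prime number theorem, so, splitting $(0,\infty)$ into short intervals $I_j=[Y_j,Y_{j+1})$ and writing $h_k(w)=\sum_j H_{k,j}(w)$ where $H_{k,j}$ collects the Euler-log terms of the primes with $\log p\in I_j$, the mass of block $j$ is $\mu_j:=\sum_{\log p\in I_j}p^{-\Re w}\asymp\int_{I_j}\tfrac{e^{y}}{y}e^{-y\Re w}\,dy$. Since $-\log\p{1-\chi_k(p)\omega(p)p^{-w}}=\chi_k(p)\omega(p)p^{-w}+O(p^{-2\Re w})$, the higher-order part of $H_{k,j}$ is $O(e^{-Y_j}\mu_j)$, negligible against $\mu_j$, so $\omega\mapsto(H_{1,j},\dots,H_{n,j})$ is a small perturbation of $\omega\mapsto\bigl(\sum_{\log p\in I_j}\chi_k(p)\omega(p)p^{-w}\bigr)_k$. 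The decisive fact is that, as $p$ runs over the primes of a block, $(\chi_1(p),\dots,\chi_n(p))$ equidistributes, by Dirichlet's theorem, over the finite subgroup $\{(\chi_1(a),\dots,\chi_n(a)):\gcd(a,Q)=1\}$ of $(S^1)^n$ ($Q$ the least common multiple of the moduli); because the $\chi_k$ are \emph{pairwise non-equivalent} the rows $(\chi_k(a))_a$ of the associated character matrix are mutually orthogonal, and a short linear-algebra computation then shows that, with the free rotations $\omega(p)$, the block attains every vector $(v_{j,1},\dots,v_{j,n})$ with $\sum_k\abs{v_{j,k}}\lesssim\mu_j$ (the implied constant being essentially $1$, via the character orthogonality), uniformly for $w\in W$ --- with enough primes per block left over to also keep the partial sums small, securing analyticity on $\Re w>\tfrac12$.

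I then take $v_{j,k}$ to be a Riemann-sum approximation of $\int_{I_j}\widetilde g_k(y)e^{-wy}\,dy$; since $\sum_k\abs{\widetilde g_k(y)}=y^{-1}\sum_k\abs{y\widetilde g_k(y)}\le e^{y}/y$ we get $\sum_k\abs{v_{j,k}}\lesssim\int_{I_j}\tfrac{e^{y}}{y}e^{-y\Re w}\,dy\asymp\mu_j$ uniformly on $W$, so the block constraints hold once the $I_j$ are fine enough and $\delta_0$ small enough --- this is exactly where the constant $1$ in $\sum_k\abs{xg_k(x)}\le1$ is spent, matching precisely the $\mu_j$-budget of each block. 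Summing over $j$ reproduces $\int_0^\infty\widetilde g_k(y)e^{-wy}\,dy$ to within $\varepsilon/3$, the tail being absorbed by convergence of the Laplace integral. The constant $C_k$ is produced separately from the ``almost constant'' primes: for $p\le e^{\eta/\delta}$ one has $p^{-\delta s}=1+O(\eta)$ on $K$ while $\sum_{p\le e^{\eta/\delta}}p^{-1}\sim\log(\eta/\delta)\to\infty$ as $\delta\to0$, so this range carries unbounded mass and, again by equidistribution of $(\chi_1(p),\dots,\chi_n(p))$, can be steered to contribute $(C_1,\dots,C_n)$ once $\delta_0$ is small relative to the $C_k$, its residual non-constant fluctuation going into the error. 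Finally, prescribing $\omega(p)=a_p$ for $p\le N$ alters the above only by a single fixed analytic function, which is re-absorbed by adjusting the $v_{j,k}$ and the $C_k$-target; shrinking $\delta_0$ keeps the ranges of primes actually used disjoint from the first $N$, so there is no conflict. Assembling the three pieces yields $\max_k\max_{s\in K}\abs{h_k(1+\delta s)-f_k(s)}<\varepsilon$.

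The genuine obstacle is the block step: upgrading the qualitative equidistribution of $(\chi_1(p),\dots,\chi_n(p))\omega(p)$ to an \emph{effective} statement that the vectors attainable as $(H_{1,j},\dots,H_{n,j})$ fill an $\ell^1$-ball of the right radius \emph{uniformly in} $w$ over the (two-real-dimensional) compact set $W$, while retaining summable control of the discretisation and tail errors so the rearranged series converge to the intended analytic functions. This is precisely where the quantitative substitute of \cite{Andersson1} for the Pechersky (L\'evy--Steinitz) rearrangement theorem does its work, and where both the budget hypothesis $\sum_k\abs{xg_k(x)}\le1$ and the pairwise non-equivalence of the $\chi_k$ are indispensable; the remainder is bookkeeping.
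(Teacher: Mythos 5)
Your overall strategy is the same as the paper's: discretize the Laplace transform by a Riemann sum, match each term by a block of primes in the $\log p$ scale, steer each block using the free phases $\omega(p)$ together with the prime number theorem in arithmetic progressions (this is where pairwise non-equivalence enters), generate the constants $C_k$ from the near-constant range of small primes whose harmonic mass $\sim\log(1/\delta)$ diverges as $\delta\to0$, and absorb the prescribed values $a_p$, $p\le N$, into the constant target. However, the step you yourself single out as ``the genuine obstacle'' --- that a block of primes of mass $\mu_j$ can realize any target vector with $\sum_k\abs{v_{j,k}}\le(1-o(1))\mu_j$ uniformly on $W$ --- is exactly the content of the paper's Lemma \ref{lem5}, and you assert it (via ``equidistribution plus a short linear-algebra computation'') rather than prove it. The actual mechanism is elementary and worth spelling out: partition the block into sub-ranges whose logarithmic masses are proportional to $\abs{b_1},\dots,\abs{b_n}$ and a leftover, set $\omega(p)=\overline{\chi_k(p)}\,b_k/\abs{b_k}$ on the $k$-th sub-range and $\omega(p)=\overline{\chi_{n+1}(p)}$ (an auxiliary character non-equivalent to all $\chi_k$) on the leftover; orthogonality from the PNT in progressions kills the cross terms. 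The same auxiliary-character device is also what your constant step is missing: the range $p\le e^{\eta/\delta}$ carries mass $\sim\log(\eta/\delta)$ which greatly exceeds $\sum_k\abs{C_k}$, so one must explicitly neutralize the excess (the paper's Lemma \ref{lem6} sets $\omega(p)=\chi(p)$ on the bulk so those sums converge to harmless constants, then corrects with finitely many steered blocks); ``its residual fluctuation goes into the error'' is not enough as stated.

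The one point where your outline would actually fail as written is the convergence and analyticity of $h_k$ on $\Re(s)>\tfrac12$. Keeping partial sums ``small'' with leftover primes, or controlling the blocks at $\Re w\approx1$, does not give convergence of $\sum_p\chi_k(p)\omega(p)p^{-w}$ for $\tfrac12<\Re w<1$: that requires square-root--type cancellation in the partial sums of $\chi_k(p)\omega(p)$ simultaneously for all $k$, and none of your choices provides it --- for instance the natural leftover choice $\omega(p)=\overline{\chi_{n+1}(p)}$ yields $\sum_p\chi_k(p)\overline{\chi_{n+1}(p)}p^{-w}$, which is not known to converge anywhere below $\Re w=1$. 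Since your scheme uses infinitely many blocks to capture the whole Laplace integral, this problem affects the entire tail, and the condition $\Re(s)>\tfrac12$ is not decorative: it is needed to feed $h_k$ into the limit theorem (Lemma \ref{LALA}). The paper avoids the issue by using only finitely many primes for the approximation and splicing in a fixed sequence $\omega_0(p)$ on every unused prime --- chosen in advance (random phases, or $\omega_0(p)=e^{2\pi i px}$ for almost all $x$ via Carleson's theorem) so that all the twisted series $\sum_p\omega_0(p)\chi_k(p)p^{-w}$ converge on $\Re w>\tfrac12$ --- so that altering finitely many terms cannot destroy convergence. Some device of this kind must be added to your argument; with it, and with Lemma \ref{lem5} proved rather than assumed, your outline becomes the paper's proof.
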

 In order to prove these results we also need some  well-known fact from the theory of universality, which we state in the following convenient form. 
\begin{lem} \label{LALA}
  Let \begin{gather*}  h_k(s)=-\sum_{p}  \log \p{1-\frac{\omega(p) \chi_k(p)}{ p^{s}}} \end{gather*} where $|\omega(p)|=1$, $\chi_k$ are Dirichlet characters where the series for each $1\leq k \leq n$ are convergent to  analytic functions $h_k$ on the half-plane $\Re(s)>\frac 1 2$. Then for any $\varepsilon>0$, $N \in \Z^+$ and compact set $K \subset \{s \in \C: \Re(s)>\frac 1 2\}$ we have that
  \begin{gather*}
    \liminf_{T \to \infty} \frac 1 T \mathop{\rm meas} \left \{t \in [0,T] : \begin{matrix*}[l]   \max_{p \leq N} \abs{p^{-it}-\omega(p)}<\varepsilon, \\ \max_{1 \leq k \leq n} \max_{s \in K} \abs{\log L(s+it,\chi_k)-h_k(s) } <\varepsilon \end{matrix*} \right \}>0. 
\end{gather*}
\end{lem}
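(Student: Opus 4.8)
The plan is to deduce Lemma~\ref{LALA} from the probabilistic machinery underlying every Voronin-type theorem: a limit theorem in the space of analytic functions, followed by a support computation which in our situation is almost trivial, precisely because the target $h_k$ is the one attached to the concrete point $\omega$. Fix $\sigma_0$ with $\tfrac12<\sigma_0<\min_{s\in K}\Re(s)$, a bounded simply connected domain $D$ and a compact set $K'$ with $K\subset\operatorname{int}(K')\subset K'\subset D$ and $\overline{D}\subset\{\Re(s)>\sigma_0\}$; write $H=H(D)$ for the space of analytic functions on $D$ with uniform convergence on compacta. Put $\gamma=\{z\in\C:\abs{z}=1\}$ and $\Omega=\prod_p\gamma$ with product Haar measure $\mathbf m$, and denote the coordinates of $\hat\omega\in\Omega$ by $\hat\omega(p)$. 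As in the classical case, $-\sum_p\log\p{1-\chi_k(p)\hat\omega(p)p^{-s}}$ converges in $H$ for $\mathbf m$-almost every $\hat\omega$ (the terms with $p^m$, $m\ge2$, converge absolutely on $\overline D$, and the linear part is an a.s.\ convergent series of independent mean-zero terms since $\sum_p p^{-2\sigma_0}<\infty$), defining a random element $\mathcal L(\hat\omega)=\p{\log L(\cdot,\chi_1,\hat\omega),\dots,\log L(\cdot,\chi_n,\hat\omega)}\in H^n$; let $P$ be the law on $\gamma^{\pi(N)}\times H^n$ of $\hat\omega\mapsto\p{(\hat\omega(p))_{p\le N},\mathcal L(\hat\omega)}$, where $\pi(N)=\#\{p\le N\}$.

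The first step is a limit theorem: writing $P_T(A)=\tfrac1T\meas\{t\in[0,T]:((p^{-it})_{p\le N},(\log L(s+it,\chi_k))_{k=1}^n)\in A\}$, one has $P_T\Rightarrow P$ weakly as $T\to\infty$. This is the standard Bagchi--Laurin\v{c}ikas limit theorem for tuples of Dirichlet $L$-functions, and I would cite it rather than reprove it: the translation flow $\phi_t(\hat\omega)=(p^{-it}\hat\omega(p))_p$ on $\Omega$ is $\mathbf m$-ergodic because $\{\log p\}_p$ is linearly independent over $\Q$, and combining ergodicity (via Birkhoff's theorem, or directly Weyl equidistribution on finite-dimensional truncations of the torus) with the mean-square approximation of $\log L(\sigma_0+it,\chi_k)$ by a truncated prime sum---discarding, by the classical zero-density bound $N(\sigma_0,T,\chi_k)=o(T)$, the $o(T)$-set of $t$ at which $L(\cdot,\chi_k)$ has a zero near the relevant strip, and transferring from the line $\Re(s)=\sigma_0$ to $K$ by the sub-mean-value inequality for analytic functions---yields convergence of $P_T$ on a convergence-determining family.

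The set in the statement is $P_T(U)$ for the \emph{open} set
\[
U=\Big\{\big((z_p)_{p\le N},(g_k)_k\big):\max_{p\le N}\abs{z_p-\omega(p)}<\varepsilon,\ \max_{k}\sup_{s\in K}\abs{g_k(s)-h_k(s)}<\varepsilon\Big\},
\]
so by the portmanteau theorem $\liminf_{T\to\infty}P_T(U)\ge P(U)$ and it suffices to show $P(U)>0$, i.e.\ that $\big((\omega(p))_{p\le N},(h_k)_k\big)\in\operatorname{supp}(P)$. Here I would isolate finitely many coordinates. Pick a small $\eta\in(0,\varepsilon]$, and choose $M\ge N$ so large that $\max_k\sup_{s\in K}\abs{h_k(s)-h_{k,M}(s)}<\varepsilon/3$, where $h_{k,M}(s)=-\sum_{p\le M}\log\p{1-\chi_k(p)\omega(p)p^{-s}}$ (possible by convergence of the series defining $h_k$), and so large that the $H$-valued random tail $\hat\omega\mapsto-\sum_{p>M}\log\p{1-\chi_k(p)\hat\omega(p)p^{-s}}$ has mean square $\ll\sum_{p>M}p^{-2\sigma_0}$ on $K'$, small enough that, by Chebyshev's inequality and the elementary bound $\sup_K\abs{g}\ll\p{\int_{K'}\abs{g}^2}^{1/2}$ for $g$ analytic on $D$, this tail is $<\varepsilon/3$ uniformly on $K$ for every $k$ with $\mathbf m$-probability at least $\tfrac12$. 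Now constrain $\hat\omega(p)$, $p\le M$, to arcs $\{z\in\gamma:\abs{z-\omega(p)}<\eta'\}$ with $\eta'\le\eta$ small enough (by uniform continuity, noting $\abs{1-\chi_k(p)zp^{-s}}\ge1-p^{-\sigma_0}>0$) that on this event the finite sum $-\sum_{p\le M}\log\p{1-\chi_k(p)\hat\omega(p)p^{-s}}$ lies within $\varepsilon/3$ of $h_{k,M}$ on $K$ for all $k$. This last event has positive $\mathbf m$-measure (a finite product of positive arc measures) and is independent of the tail, so their intersection has positive measure, and on it $\max_{p\le N}\abs{\hat\omega(p)-\omega(p)}<\varepsilon$ and, by the triangle inequality, $\max_k\sup_{s\in K}\abs{\log L(s,\chi_k,\hat\omega)-h_k(s)}<\varepsilon$; hence $P(U)>0$.

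The main obstacle lies entirely in the first step: making the limit theorem precise, in particular the mean-square approximation of $\log L$ on the line $\Re(s)=\sigma_0$ with $\tfrac12<\sigma_0<1$---which genuinely requires a zero-density input to handle the branch of the logarithm---and verifying that the truncations form a convergence-determining family. The support step is soft by comparison, since the prescribed $\omega$ literally witnesses membership in $\operatorname{supp}(P)$ once finitely many coordinates are isolated and the tail is bounded in mean square. This is precisely why the argument is routed through $\omega$, with Lemma~\ref{Le2} carrying the new content of producing such an $\omega$ in the first place.
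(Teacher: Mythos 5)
Your proposal is correct and follows exactly the route the paper itself gestures at: the paper gives no detailed argument for Lemma~\ref{LALA}, merely remarking that it is standard and follows from the methods of \cite[Section 12.1]{Steuding}, i.e.\ the Bagchi-type limit theorem in $H(D)$ plus a support/portmanteau argument, which is precisely what you sketch. Your writeup (isolating finitely many coordinates, bounding the random tail in mean square, and noting that the prescribed $\omega$ itself witnesses membership in the support) is in fact more explicit than anything in the paper, and the points you flag as requiring care (zero-density input for $\log L$, convergence-determining truncations) are exactly the standard ingredients the cited reference supplies.
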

\begin{proof}
We have not found this precise statement of the Lemma in the literature, but it is likely somewhere (should look further). It is clear though that its proof is standard, for example it follows from methods of \cite[Section 12.1]{Steuding}.
\end{proof} 
\vskip 8pt

\noindent {\em Proof of Theorem \ref{th3}.}
 By Lemma \ref{Le2} we may find some  $\delta_0>0$ such that for any $0<\delta \leq \delta_0$ there exists some series
\begin{gather*} h_k(s)=-\sum_p \log \p{1-\frac{\omega(p) \chi_k(p)}{ p^{s}}},
\end{gather*} such that \begin{gather} \label{omp} \omega(p)=a_p, \qquad p \leq N,
\end{gather}
and $|\omega(p)|=1$ which for each $k=1,\ldots,n$ are convergent on the half-plane $\Re(s)>\frac 1 2$ to analytic functions $h_k$ 
such that
\begin{gather} \label{uu1}
   \max_{1\leq k \leq n} \max_{s \in K} \abs{h_k(1+\delta s)-f_k(s)} <\frac {\varepsilon} 2.
\end{gather}
By using Lemma \ref{LALA} with the compact set $1+\delta K$ which for a sufficiently small $\delta$ lies in the half plane $\Re(s)>\frac 1 2$, it follows that
  \begin{gather} \label{uu2}
 \liminf_{T \to \infty} \frac 1 T \mathop{\rm meas} \left \{t \in [0,T]:\begin{matrix*}[l]   \max_{p \leq N} \abs{p^{-it}-\omega(p)}<\varepsilon, \\  \max_{1\leq k \leq n} \max_{z \in 1+\delta K} \abs{\log L(z+it,\chi_k)-h_k(z)}<\frac \varepsilon  2   \end{matrix*}
 \right \}>0. 
\end{gather}
Our result follows by the change of variable $z=1+\delta s$, the identity \eqref{omp}, the inequalities \eqref{uu1}, \eqref{uu2} and the triangle inequality.
 \qed
 \vskip 8pt
 
 The next lemma appears as
\cite[Lemma 3]{Andersson1} and is a consequence of Mergelyan's theorem and the theory of Laplace transforms. 
\begin{lem} \label{LA3} Assume that $f$ is any zero-free function on a compact set $K$ with connected complement analytic in its interior. Then given $\varepsilon>0$ there exist some $A,B,N>0$  and continuous function $g:[A,B] \to \C$ such that $|g(x)| \leq N$ and that if
\begin{gather*}
  G(s) = \int_A^B g(x) e^{-sx} dx, \\ \intertext{then}
  \max_{s \in K} \abs{G(s)-f(s)}<\varepsilon,
\end{gather*}
\end{lem}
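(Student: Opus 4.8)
The plan is to obtain this from Mergelyan's theorem together with a soft duality argument identifying the uniform closure, in $C(K)$, of the linear span of the exponential functions $s\mapsto e^{-sx}$. Write $A(K)$ for the space of functions continuous on $K$ and analytic in its interior (so $f\in A(K)$). Fix once and for all some $0<A<B$. The heart of the matter is the claim that the closed linear span in $C(K)$ of $\{e^{-sx}:x\in[A,B]\}$ equals $A(K)$; granting this, $f$ lies in that closure, and a further approximate–identity step replaces a finite exponential sum by a genuine Laplace transform with continuous, bounded density, which is exactly the conclusion. (The hypothesis that $f$ is zero-free is not actually needed for the statement; it is carried along only for compatibility with the way the lemma is used.)

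To prove the claim I would argue by Hahn–Banach and the Riesz representation theorem: it suffices to show that any complex Borel measure $\mu$ on $K$ with $\int_K e^{-sx}\,d\mu(s)=0$ for all $x\in[A,B]$ annihilates all of $A(K)$. Set $\phi(x)=\int_K e^{-sx}\,d\mu(s)$. Since $\abs s$ is bounded on the compact set $K$, one may differentiate under the integral sign and $\phi$ is entire; vanishing on the interval $[A,B]$ it vanishes identically, so $\phi^{(n)}(0)=(-1)^n\int_K s^n\,d\mu(s)=0$ for every $n\ge 0$. Thus $\mu$ annihilates every polynomial, and since $\C\setminus K$ is connected, Mergelyan's theorem makes polynomials dense in $A(K)$, so $\mu$ annihilates $A(K)$. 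Consequently there is a finite combination $\sum_{j=1}^m c_j e^{-sx_j}$ with distinct $x_j\in(A,B)$ and $\max_{s\in K}\abs{f(s)-\sum_{j}c_j e^{-sx_j}}<\varepsilon/2$.

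It remains to pass from these point masses to a continuous density. For each $j$ and small $\rho>0$ choose a continuous bump $\psi_j\ge 0$ supported in $(x_j-\rho,x_j+\rho)\subset(A,B)$ with $\int\psi_j=1$; then $\abs{\int_A^B\psi_j(x)e^{-sx}\,dx-e^{-sx_j}}\le\sup_{\abs{x-x_j}\le\rho}\abs{e^{-sx}-e^{-sx_j}}$, which tends to $0$ as $\rho\to 0$, uniformly for $s\in K$ by uniform continuity of $(s,x)\mapsto e^{-sx}$ on $K\times[A,B]$. Taking $\rho$ small enough that the supports are pairwise disjoint and setting $g=\sum_{j=1}^m c_j\psi_j$, the function $g$ is continuous on $[A,B]$ with a finite bound $N$ (depending on $\varepsilon$), and $G(s)=\int_A^B g(x)e^{-sx}\,dx=\sum_j c_j\int_A^B\psi_j(x)e^{-sx}\,dx$ can be made to approximate $\sum_j c_j e^{-sx_j}$ to within $\varepsilon/2$ on $K$. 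The triangle inequality then gives $\max_{s\in K}\abs{G(s)-f(s)}<\varepsilon$.

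The only mildly delicate step is the duality argument of the second paragraph — identifying the uniform closure of $\operatorname{span}\{e^{-sx}:x\in[A,B]\}$ — and even there the essential observation is the classical one that the Laplace transform of a measure is entire, hence determined by its values on an interval; everything else is Mergelyan's theorem and a routine approximate-identity estimate, with the bound $N$ permitted to deteriorate as $\varepsilon\to 0$.
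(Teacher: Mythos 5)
Your proposal is correct, and it is a complete, self-contained proof. Note that the paper itself gives no argument at this point: it merely cites \cite[Lemma 3]{Andersson1}, describing the lemma as ``a consequence of Mergelyan's theorem and the theory of Laplace transforms,'' so there is no in-text proof to compare against line by line. Your route is exactly in that spirit but makes the Laplace-transform half precise by duality: Hahn--Banach plus Riesz representation reduce the density of $\operatorname{span}\{e^{-sx}:x\in[A,B]\}$ in the space of functions continuous on $K$ and analytic in its interior to showing that an annihilating measure $\mu$ kills all polynomials, which follows because $\phi(x)=\int_K e^{-sx}\,d\mu(s)$ is entire and vanishes on $[A,B]$, hence identically, giving $\int_K s^n\,d\mu(s)=0$; Mergelyan (using that $\C\setminus K$ is connected) then finishes the duality step, and your approximate-identity argument correctly converts the resulting finite exponential sum into a genuine integral $\int_A^B g(x)e^{-sx}\,dx$ with $g$ continuous and bounded. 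Two small remarks: a finite element of the span may a priori involve nodes at the endpoints $A$ or $B$, but as you implicitly use, continuity of $x\mapsto e^{-sx}$ uniformly on $K$ lets you move them into $(A,B)$ at negligible cost; and your observation that the zero-free hypothesis is never used is accurate --- it is carried in the statement only for compatibility with how the lemma is applied (to logarithms of Euler products) in \cite{Andersson1}. What your abstract duality argument buys is brevity and the stronger conclusion that $A,B$ may be fixed in advance; a more constructive proof along the lines suggested by the citation would instead exhibit the approximating density $g$ explicitly, which some readers may prefer, but nothing in the present paper requires that extra information.
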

\begin{proof} See \cite[Lemma 3]{Andersson1}. \end{proof}

\noindent {\em Proof of Theorem \ref{th4}.} 
 It is sufficient to show that for any $\varepsilon_1>0$ there exist some $\delta_0>0$ and $C_0>0$ such that for any $|C_k| \geq C_0$ then
\begin{gather} \label{ab333}
    \max_{1 \leq p \leq N} \abs{p^{-it}-a_p}<\varepsilon,  \\ \intertext{and}   \label{ab3}
       \max_{1 \leq k \leq n} \max_{s   \in K}     \abs{\log(L(1+it+\delta s,\chi_k))-\log(f_k(s)-C_k)}<\frac{\varepsilon_1}{|C_k|}, 
          \end{gather}
holds with a positive lower measure $0\leq t \leq T$ as $T \to \infty$. It is clear that
\begin{gather} \label{iden2}
   \log(f_k(s)-C_k)=\log \p{-C_k \p{1- \frac {f_k(s)}  {C_k}}}= \log (-C_k)+\log \p{1-\frac {f(s)}   {C_k}},
\end{gather}
and if we  choose  $|C_k| \geq 1+ \max_{
s \in K} {|f_k(s)|}(4 \varepsilon_1^{-1})$ then 
\begin{gather} \label{ab1}
 \abs{\frac{f_k(s)} {C_k} +\log \p{1-\frac {f_k(s)} {C_k}}}<\frac{\varepsilon_1} {3|C_k|}. 
\end{gather}
By Lemma \ref{LA3} there exist some $0<A<B$ and continuous functions  $g_k:[A,B] \to \C$ such that
\begin{gather} \label{ab0} 
 \max_{s \in K}  \abs{G_k(s)-    f_k(s)}<\frac{\varepsilon_1} 3,  \\ \intertext{where} \notag
    G_k(s)=\int_A^B g_k(x) e^{-sx} dx.
\end{gather}
If we also choose $|C_k| \geq  n\max_{A \leq x \leq B} |x g_k(x)|$ then the
functions $$h_k(s)=\frac {G_k(s)} {C_k}+  \log {(-C_k)}$$ satisfy the condition of Theorem  \ref{th3} so that there exists some $\delta_0 >0$  where \eqref{ab333} and 
\begin{gather} \label{ab2}
  \max_{s \in K} \abs{\log L(1+it+\delta s,\chi_k)-\frac{G_k(s)} {C_k}- \log (-C_k)}<\frac {\varepsilon_1} {3|C_k|}
\end{gather}
holds for any  $0<\delta \leq \delta_0$  with a  positive lower measure $0\leq t \leq T$ as $T \to \infty$. Finally the inequality \eqref{ab3} follows by  the identity \eqref{iden2}, the inequalities \eqref{ab1}, \eqref{ab0}, \eqref{ab2} and the triangle inequality.  \qed

\section{Proof of universality of the Hurwitz zeta-function}

In this section we prove our universality theorem for the Hurwitz zeta-function, Theorem \ref{th}. For the rational case we will use Theorem \ref{th4}. For the irrational case we first introduce some notation
\begin{defn}
We say that $\omega:\{\alpha,\ldots,\alpha+N\} \to \C$ is a completely multiplicative unimodular function if $|\omega(n+\alpha)|=1$ for $n=0,\ldots,N$ and if
\begin{gather}
\label{cond2}   \prod_{n=0}^N (n+\alpha)^{b_n}=1, \qquad (b_n \in \Z) \\ \intertext{then}
   \label{cond3} \prod_{n=0}^N \p{\omega(n+\alpha)}^{b_n}=1. 
\end{gather}
\end{defn}
We need the following Lemma (for its proof, see section \ref{sec5}) 
 which is a substitute for the Pechersky rearrangement theorem in the classical arguments.  
\begin{lem} \label{LA}
 For any irrational parameter $\alpha>0$,  compact set $K$, $\varepsilon>0$  and function  $f$,  satisfying the conditions of Theorem \ref{th} there exists   some  $\delta_0>0$ such that for any $0<\delta \leq \delta_0$ there exists  some $N_0$ such that for any $N \geq N_0$ there exists some completely multiplicative unimodular function $\omega:\{\alpha,\ldots,N+\alpha\} \to \C$ such that
 \begin{gather} \notag
    \max_{s \in K} \abs{\sum_{n=0}^N  \frac{\omega(n+\alpha)} {(n+\alpha)^{1+\delta s}} -f(s)} <\varepsilon.
\end{gather}
 \end{lem}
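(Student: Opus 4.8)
The plan is to reduce the problem to a classical Diophantine approximation statement of Weyl/Kronecker type, in the spirit of Cassels's approach to the zero-distribution of Hurwitz zeta-functions. First I would use Lemma \ref{LA3} (Mergelyan plus Laplace transforms), after reducing to the zero-free case, to replace $f$ by a Dirichlet polynomial target: given $\varepsilon$, choose $A,B,N,g$ so that $G(s)=\int_A^B g(x)e^{-sx}\,dx$ approximates $f(s)$ on $K$ to within $\varepsilon/3$. Since $G$ is an integral of exponentials, a Riemann-sum discretization of the integral together with the substitution $e^{-x}\leftrightarrow (n+\alpha)^{-\delta}$ (i.e.\ matching $x$ to $\delta\log(n+\alpha)$) shows that, for $\delta$ small enough and $N$ large enough, one can find real coefficients $c_n\ge 0$ supported on a range of $n$ with $\delta\log(n+\alpha)\in[A,B]$ such that $\sum_n c_n (n+\alpha)^{-\delta s}$ approximates $G(s)$ on $K$ to within $\varepsilon/3$; the density of the points $\delta\log(n+\alpha)$ in $[A,B]$ (spacing $O(\delta/n)\to 0$) is what makes the Riemann sum legitimate uniformly on the compact set $K$. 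The $\ell^1$-type normalization $\sum_n |c_n|\le$ const is automatic from $|g|\le N$ and the length $B-A$.

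The second and main step is to convert the real nonnegative amplitudes $c_n(n+\alpha)^{-\delta\Re s}$ into a genuine completely multiplicative unimodular function $\omega$ with $|\omega(n+\alpha)|=1$ while only perturbing the sum by $\varepsilon/3$. Here is where the irrationality of $\alpha$ (and its arithmetic independence from the rationals and from the other $n+\alpha$) enters: the numbers $\{\log(n+\alpha)\}_{n=0}^N$, together with the logs of primes, are linearly independent over $\Q$ in the cases we need, so the map $t\mapsto \big((n+\alpha)^{-it}\big)_{n}$ has dense (indeed equidistributed) image in the torus by the Weyl criterion. Consequently the vector of phases can be prescribed arbitrarily, and a Kronecker-type argument lets us choose a unimodular $\omega$ on $\{\alpha,\dots,N+\alpha\}$ respecting exactly the multiplicative relations \eqref{cond2}$\Rightarrow$\eqref{cond3} — i.e.\ $\omega$ is a character of the multiplicative group generated by the $n+\alpha$ — such that $\omega(n+\alpha)(n+\alpha)^{-\delta s}$ has the prescribed phase aligned with the target. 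The point is that the relations \eqref{cond2}--\eqref{cond3} are precisely the constraints that cannot be avoided, and modulo those the phases are free; a short convexity/rearrangement argument (the substitute for Pechersky's theorem advertised in the statement) then produces $\omega$ so that $\sum_n \omega(n+\alpha)(n+\alpha)^{-\delta s}$ equals $\sum_n c_n(n+\alpha)^{-\delta s}$ up to $\varepsilon/3$ uniformly on $K$. Chaining the three $\varepsilon/3$ estimates gives the claim, and $\delta_0$ is dictated by the requirement $1+\delta K\subset\{\Re s>\tfrac12\}$ together with the Riemann-sum step, while $N_0$ is dictated by how fine a discretization of $[A,B]$ we need.

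The handling of the branch/zero issue deserves care: Lemma \ref{LA3} requires $f$ zero-free, so in general one first writes $f = e^{\tilde f}$ on a neighborhood where $f\neq 0$, or more precisely approximates: if $f$ has zeros one cannot directly use \ref{LA3}, but here $f$ is the target of $\sum_n \omega(n+\alpha)(n+\alpha)^{-1-\delta s}$ which is itself a bona fide (entire, after the shift) function that can vanish, so one should approximate $f$ first by a polynomial via Mergelyan and then realize the polynomial as such a sum; alternatively one factors through the logarithm on a slightly larger zero-free region as is done for $L$-functions in Theorem \ref{th4}. I expect the routine bookkeeping here to be the less interesting part.

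The hard part will be Step 2: verifying the $\Q$-linear independence of $\{\log(n+\alpha):0\le n\le N\}$ (this is where ``algebraic irrational $\alpha$'' is actually fine, by a Baker-type / elementary valuation argument, since a multiplicative relation $\prod(n+\alpha)^{b_n}=1$ among algebraic numbers in a fixed number field is controlled by finitely many places — the restriction $K\subset\{\Re s>0\}$ and the exclusion $\alpha=\tfrac12$ are exactly what let us dodge the degenerate relations), and then turning the equidistribution input into the explicit construction of $\omega$ with the required approximation bound. In other words, the crux is proving that the only obstructions to prescribing the phases are the unavoidable relations \eqref{cond2}, and that once these are respected the approximation can be made as good as we like uniformly on $K$ — this is the content that section \ref{sec5} will have to supply, and it is the genuine substitute for the Pechersky rearrangement theorem.
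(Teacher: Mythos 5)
Your scaffolding (Mergelyan--Laplace reduction via Lemma \ref{LA3}, matching $x$ with $\delta\log(n+\alpha)$, then choosing phases subject only to the unavoidable relations \eqref{cond2}--\eqref{cond3}) points in the right direction, but the step you explicitly defer -- actually producing the completely multiplicative unimodular $\omega$ -- is the entire content of the lemma, and the mechanism you sketch for it does not work as stated. The appeal to Weyl/Kronecker equidistribution of $t\mapsto\p{(n+\alpha)^{-it}}_n$ is both unnecessary (no variable $t$ occurs in Lemma \ref{LA}: one may assign the phases of $\omega$ at a $\Q$-basis of the $\log(n+\alpha)$ at will and extend multiplicatively) and insufficient: phase freedom does not touch the real obstacle, namely that every term of the sum has the \emph{fixed} modulus $(n+\alpha)^{-1-\delta\Re s}$, so no choice of phases can directly reproduce your nonnegative Riemann-sum amplitudes $c_n$; the promised ``short convexity/rearrangement argument'' is exactly what is missing. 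Moreover, your assertion that the numbers $\log(n+\alpha)$, $0\le n\le N$, are $\Q$-linearly independent in the algebraic case is false in general (for $\alpha=\sqrt2-1$ one has $\alpha(\alpha+2)=1$), so one needs a quantitative statement about how often the forced indices occur, not just the qualitative remark that relations are ``the constraints that cannot be avoided.''

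The paper supplies precisely these two missing ingredients. First, a greedy recursive construction \eqref{omegadef}: at every multiplicatively free index $n$ the phase of $\omega(n+\alpha)$ is aligned with the current deficit $\int_0^{\delta\log(n+\alpha)}g(x)\,dx-\sum_{k<n}\omega(k+\alpha)/(k+\alpha)$, so the partial sums of $\omega(n+\alpha)/(n+\alpha)$ track the running integral of $g$; this is feasible despite the prescribed moduli because a window $x\le\delta\log(n+\alpha)\le x+dx$ carries harmonic mass of order $dx/\delta$, which for small $\delta$ dwarfs the increment of the target. A partial-summation step (\eqref{iii} to \eqref{iv}) then converts this tracking into uniform approximation of $\int g(x)e^{-sx}dx$ on $K$. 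Second, the quantitative input replacing your independence claim is the sharpened Cassels lemma (Lemma \ref{cassel}): at least $51\%$ of the integers in every short interval $[N,N+M_N]$ with $M_N=o(N)$ are free, so in each window the freely steered mass outweighs the forced mass and the tracking error is $o(1)$ as $n\to\infty$ (Cassels' original window $10^{-6}N$ would only yield a fixed small error, not an arbitrary $\varepsilon_2$). Without an argument of this kind -- or some substitute showing the forced terms cannot derail the approximation -- your Step 2 remains a genuine gap rather than a proof.
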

 For the case when $\alpha$ is transcendental we do not require that $K$ lies in the half plane $\Re(s)>0$ and we need the following standard limit theorem for the Hurwitz zeta-function
 \begin{lem} \label{LALA5}
  Let $\alpha$ be transcendental and let \begin{gather*}  H(s)=\sum_{n=0}^\infty \frac{\omega(n+\alpha)}{(n+\alpha)^s} \end{gather*} such that $|\omega(n+\alpha)|=1$ be a Dirichlet series convergent on the half plane $\Re(s)>\frac 1 2$.  Then for any $\varepsilon>0$ and compact set $K \subset \{s \in \C: \Re(s)>\frac 1 2\}$ we have that
  \begin{gather*}
 \liminf_{T \to \infty} \frac 1 T \mathop{\rm meas} \left \{t \in [0,T]: \max_{s \in K} \abs{\zeta(s+it,\alpha)-H(s)}<\varepsilon \right \}>0. 
\end{gather*}
\end{lem}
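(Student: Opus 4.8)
The plan is to prove Lemma~\ref{LALA5} by Bagchi's probabilistic method, adapted to the Hurwitz zeta-function with a transcendental parameter. Put $D=\{s\in\C:\Re(s)>\tfrac12\}$ and let $H(D)$ be the space of analytic functions on $D$ with the topology of uniform convergence on compact sets. For $T>0$ let $P_T$ be the probability measure on $H(D)$ defined by $P_T(A)=\tfrac1T\meas\{t\in[0,T]:\zeta(\cdot+it,\alpha)\in A\}$. Since $G=\{g\in H(D):\max_{s\in K}\abs{g(s)-H(s)}<\varepsilon\}$ is an open subset of $H(D)$, it suffices to show that $P_T$ converges weakly to a probability measure $P$ on $H(D)$ and that $H$ lies in the support of $P$: then the portmanteau theorem yields $\liminf_{T\to\infty}P_T(G)\geq P(G)>0$, which is exactly the claim. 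Thus the argument splits into a \emph{limit theorem} and a \emph{support computation}.

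For the limit theorem, let $\Omega=\prod_{n\geq0}S^1$ carry normalized Haar measure $m$, and for $\omega\in\Omega$ put $\underline\zeta(s,\alpha,\omega)=\sum_{n\geq0}\omega(n)(n+\alpha)^{-s}$; by standard facts on random Dirichlet series (summability of $\sum_n(n+\alpha)^{-2\sigma}$ for $\sigma>\tfrac12$ together with a Cauchy-estimate argument) this converges for $m$-almost every $\omega$ to an element of $H(D)$, giving a push-forward measure $\widehat P$ on $H(D)$. One shows $P_T\Rightarrow\widehat P$ by Bohr's method: using $\zeta(s,\alpha)=\sum_{n<X}(n+\alpha)^{-s}+\frac{(X+\alpha)^{1-s}}{s-1}+\cdots$ and the mean-square bound $\int_0^T\abs{\zeta(\sigma+it,\alpha)}^2\,dt\ll_\sigma T$ for $\sigma>\tfrac12$, one reduces (in the sense of weak convergence, after a limiting argument in $X$) to the distribution of the linear flow $t\mapsto\p{(n+\alpha)^{-it}}_{n\geq0}$ on $\Omega$. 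By Weyl's criterion this flow is uniformly distributed with respect to $m$ if and only if $\{\log(n+\alpha):n\geq0\}$ is linearly independent over $\Q$, and \emph{this is precisely where transcendence of $\alpha$ enters}: a relation $\prod_{n=0}^{N}(n+\alpha)^{b_n}=1$ with $b_n\in\Z$ not all zero would make $\alpha$ a zero of the non-constant rational function $\prod_n(x+n)^{b_n}-1$, hence algebraic, a contradiction. Tightness of $\{P_T\}$ follows from the same mean-square bound and Cauchy's integral formula. All of this is entirely standard; see \cite{Bagchi}, \cite{GaLa}, or the methods of \cite[Section~12.1]{Steuding}.

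It remains to check that $H\in\operatorname{supp}(\widehat P)$. Writing $c_n=\omega(n+\alpha)$ for the given coefficients, we must show that for every compact $K\subset D$ and $\varepsilon>0$ the set $\{\omega\in\Omega:\max_{s\in K}\abs{\underline\zeta(s,\alpha,\omega)-H(s)}<\varepsilon\}$ has positive $m$-measure. Split the difference as $\sum_{n\leq N}(\omega(n)-c_n)(n+\alpha)^{-s}+\sum_{n>N}\omega(n)(n+\alpha)^{-s}-\sum_{n>N}c_n(n+\alpha)^{-s}$. Since $H$ converges on $D$ by hypothesis, the theory of general Dirichlet series gives uniform convergence on $K$, so $\max_{s\in K}\abs{\sum_{n>N}c_n(n+\alpha)^{-s}}\to0$ as $N\to\infty$. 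For the random tail, $\int_\Omega\abs{\sum_{n>N}\omega(n)(n+\alpha)^{-s}}^2\,dm=\sum_{n>N}(n+\alpha)^{-2\Re(s)}$; integrating over a slightly larger compact set, a Cauchy estimate and Chebyshev's inequality show that the event $\max_{s\in K}\abs{\sum_{n>N}\omega(n)(n+\alpha)^{-s}}<\varepsilon/3$ has $m$-measure $>\tfrac12$ once $N$ is large enough. Finally the event ``$\abs{\omega(n)-c_n}<\eta$ for all $n\leq N$'' depends only on the coordinates $n\leq N$, has measure $\prod_{n\leq N}m_{S^1}(\{\abs{z-c_n}<\eta\})>0$, and forces $\max_{s\in K}\abs{\sum_{n\leq N}(\omega(n)-c_n)(n+\alpha)^{-s}}<\varepsilon/3$ provided $\eta$ is small (with $N$ already fixed). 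The last two events depend on disjoint blocks of coordinates, hence are $m$-independent, so their intersection has positive measure, and on it $\max_{s\in K}\abs{\underline\zeta(s,\alpha,\omega)-H(s)}<\varepsilon$. Therefore $H\in\operatorname{supp}(\widehat P)=\operatorname{supp}(P)$, and the portmanteau step above finishes the proof.

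The main obstacle is the limit theorem, and inside it the identification of the correct limit measure — the passage from the equidistribution of the flow on $\Omega$ to the distribution of $\zeta(\cdot+it,\alpha)$ itself — which needs the mean-value estimate for $\zeta(s,\alpha)$ on vertical lines in $\Re(s)>\tfrac12$ and a careful truncation argument. Conceptually, however, everything hinges on the one arithmetic fact that transcendence of $\alpha$ forces $\{\log(n+\alpha)\}_{n\geq0}$ to be linearly independent over $\Q$; with that in hand the remainder is classical, and, as with Lemma~\ref{LALA}, one could also simply invoke it from the literature.
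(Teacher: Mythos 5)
Your proposal is essentially correct, and it supplies what the paper deliberately leaves out: the paper gives no argument for Lemma \ref{LALA5} beyond the remark that the statement is standard, whereas you actually carry out the standard Bagchi-type proof. Your two-step structure (a limit theorem $P_T\Rightarrow\widehat P$ in $H(D)$ plus a positive-measure estimate for the $\varepsilon$-neighbourhood of $H$ under $\widehat P$, combined via portmanteau) is exactly the classical route one finds in \cite{Bagchi}, \cite{GaLa} and \cite[Section 12.1]{Steuding}, and the one place where the hypothesis on $\alpha$ is used is correctly identified: a nontrivial relation $\prod_{n\le N}(n+\alpha)^{b_n}=1$ with integer exponents would make $\alpha$ a root of the nonzero rational function $\prod_n(x+n)^{b_n}-1$, contradicting transcendence, so $\{\log(n+\alpha)\}_{n\ge 0}$ is $\Q$-linearly independent and Weyl's criterion applies to every finite-dimensional projection of the flow on $\Omega$. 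The support step is also sound: uniform convergence of $\sum_{n>N}c_n(n+\alpha)^{-s}$ on $K$ follows from convergence of the given series in the half-plane together with compactness of $K$, the second-moment/Chebyshev/Cauchy estimate controls the random tail with probability exceeding $\tfrac12$, and the head and tail events live on disjoint coordinate blocks of the product space, hence are independent; note that you do not even need the full statement $H\in\operatorname{supp}\widehat P$, since $\widehat P(G)>0$ is all the portmanteau inequality requires. The only ingredients you invoke without proof --- the mean-square bound $\int_0^T\abs{\zeta(\sigma+it,\alpha)}^2dt\ll_\sigma T$ for $\sigma>\tfrac12$ and the tightness/truncation bookkeeping in Bohr's method --- are indeed standard for the Hurwitz zeta-function with arbitrary parameter, so citing them is legitimate; writing them out in full would be the only remaining work if one wanted a self-contained proof.
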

\begin{proof}
We have not found this precise statement of the Lemma in the literature, but it is likely somewhere (should look further). It is clear though that its proof is standard.
\end{proof}

\noindent {\em Proof of Theorem \ref{th}.}

We divide the proof according to whether $\alpha$ is rational, or irrational and divide the irrational case according to whether $\alpha$ is algebraic or transcendental
\begin{description}
  \item[$\alpha$ rational:] 
  Let 
  \begin{gather} \notag \alpha= \frac p q, \qquad (p,q)=1, \\ \intertext{where $q \geq 3$. Then} \zeta(s,\alpha)= \frac{q^{s}} {\varphi(q)} \sum_{\chi \pmod q}  
   \overline{\chi(p)} L(s,\chi).  \label{iden} 
  \end{gather}
   By Theorem \ref{th3} we may find some sufficiently large $C$ such that for any $0<\delta<\delta_0$ we may approximate
   \begin{gather}  \label{ineq}
      \max_{s \in K} \max_{\chi \pmod q} \abs{L(1+it+ \delta s,\chi) - C- \frac{\chi(p)} {q} f(s)}<\frac{\varepsilon}{2 q},  \\ \intertext{and} \label{ineq9}  \abs{q^{it}-1}<\varepsilon_2,
\end{gather}   
on a set of positive lower measure  in $t$. By the inequality \eqref{ineq} and the identities  \eqref{iden},  
\begin{gather*} 
  \sum_{\chi \pmod q} \chi(p) =0,
\end{gather*}
 and the triangle inequality this implies that
\begin{gather} \label{aa}
   \max_{s \in K} \abs{q^{\delta s+it}  \zeta(1+it+\delta s,\alpha)-f(s)}< \frac {\varepsilon} 2
\end{gather}
on a set of positive lower measure  in $t$. If  $\delta$ and $\varepsilon_2$ are  chosen sufficiently small then it furthermore follows from \eqref{ineq9} and the fact that $K$ is compact that  $q^{\delta s+it}$ is sufficiently close to $1$ such that 
\begin{gather} \label{ab}
   \max_{s \in K} \abs{q^{\delta s+it}  \zeta(1+it+\delta s,\alpha) -  \zeta(1+it+\delta s,\alpha)}< \frac{\varepsilon} 2
\end{gather} 
for these values of $t$.  The result follows from the triangle inequality and the inequalities \eqref{aa} and \eqref{ab}.
   \item[$\alpha$ irrational:] 
        By Lemma \ref{LA} there exists some $\delta_0$ such that for any $0<\delta\leq \delta_0$  and any sufficiently large $N$  there exists a completely multiplicative unimodular function $\omega:\{\alpha,\ldots,N+\alpha\} \to \C$   such that 
     \begin{gather} \label{tra1}
     \max_{s \in K} \abs{\sum_{n=0}^N  \frac{\omega(n+\alpha)} {(n+\alpha)^{1+\delta s}} -f(s)} <\frac{\varepsilon} 3.
    \end{gather}
    \begin{description} \item[$\alpha$ algebraic:]
     In the algebraic irrational case we assume that $N$ in \eqref{tra1} is sufficiently large so that if $\xi:=\min_{s \in K}\Re(s)>0$ then
     \begin{gather} \label{tra2}
       \sum_{k=N+1}^\infty \frac 1 {(k+\alpha)^{1+\delta \xi}} <\frac \varepsilon 3.
      \end{gather}
     Now let $\{\log(n_1+\alpha),\ldots,\log(n_k+\alpha)\}$ be a basis for $\operatorname{Span}_\Q \{\log(n+\alpha):0 \leq n \leq N\}$. It is clear that if
     \begin{gather} \label{ire5}
          \max_{1 \leq j \leq k} \abs{(n_j+\alpha)^{-it}-\omega(n_j+\alpha)}<\varepsilon_2 \end{gather}
      for some sufficiently small $\varepsilon_2>0$  then 
      \begin{gather} \label{tra3}
       \max_{s \in K}  \abs{\sum_{n=0}^N   \frac{\omega(n+\alpha)} {(n+\alpha)^{1+\delta s}} -\sum_{n=0}^N  \frac{1} {(n+\alpha)^{1+it+\delta s}} } <\frac{\varepsilon} 3.
      \end{gather}
It follows by \eqref{tra1}, \eqref{tra2}, \eqref{tra3} and the triangle inequality that
\begin{gather*}
  \max_{s \in K} \abs{ \zeta(1+it+\delta s,\alpha)-f(s)} <\varepsilon,
\end{gather*} 
whenever $t$ satisfies the inequality \eqref{ire5} for some sufficiently small $\varepsilon_2>0$. By Weyl's version (see \cite{Weyl} or  \cite[Lemma 1.8]{Steuding}) of the Kroenecker's approximation theorem the set of $0 \leq t \leq T$ where \eqref{ire5} holds has positive lower density as $T\to \infty$.
\item[$\alpha$ transcendental:] In the transcendental case we assume that $N$ in \eqref{tra1} is  chosen sufficiently large so that
\begin{gather} \label{tra4}
 \max_{s \in K} \abs{\sum_{n=N+1}^\infty \frac {(-1)^n}{(n+\alpha)^{1+\delta s}}}<\frac{\varepsilon} 3.
\end{gather}
We now extend $\omega$ to be a unimodular function $\omega:\N+\alpha \to \C$ by defining $\omega(n+\alpha)=(-1)^n$ for $n \geq N+1$. By assuming that $\delta_0$ is sufficiently small such that $1+\delta_0 K \subset \{s \in \C: \Re(s)>\frac 1 2\}$ it  follows from Lemma \ref{LALA5} that
\begin{gather} \label{tra5}
 \liminf_{T \to \infty} \frac 1 T \mathop{\rm meas} \left \{t \in [0,T]: \max_{z \in 1+ \delta K} \abs{\zeta(z+it,\alpha)-\sum_{n=0}^\infty \frac {\omega(n+\alpha)}{(n+\alpha)^{z}}}<\frac \varepsilon 3 \right \}>0. 
\end{gather}
The conclusion of the transcendental case follows from the change of variables $z=1+\delta s$, the inequalities  \eqref{tra1}, \eqref{tra4}, \eqref{tra5} and the triangle inequality.
\end{description}
   \end{description} \qed

\section{The rational case - Proof of Lemma \ref{Le2} \label{sec4}}

 \begin{lem} \label{lem5}
    Let $\chi_1,\ldots,\chi_n$ be pairwise non-equivalent Dirichlet characters.
    Then for any $\xi>0$ and $\varepsilon>0$ there exists some $P_0>0$ such that if  $b_k$ satisfies  \begin{gather} \label{hury}
    \sum_{k=1}^n \abs{b_k}  \leq 1,
       \end{gather}
       and $P>P_0$, then  there exists some unimodular numbers $|\omega(p)|=1$ for primes $P<p \leq P^{1+\xi}$  such that
       \begin{gather*}
          \max_{1\leq k \leq n}\abs{\frac 1 {\log(1+\xi)} \sum_{P<p \leq P^{1+\xi}}\frac{\omega(p) \chi_k(p)} p -b_k} <\varepsilon.
\end{gather*}
\end{lem}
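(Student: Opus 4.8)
The idea is to pass, via orthogonality of Dirichlet characters, from the $n$-dimensional target $(b_1,\dots,b_n)$ to one independent ``fill up a disk'' problem inside each reduced residue class modulo a common modulus of the $\chi_k$. First I would replace the lemma by the following stronger assertion: if in addition $\sum_{k=1}^n \abs{b_k}\le 1-\eta$ for some fixed $\eta>0$, then the $\omega(p)$ can be chosen so that the displayed expression equals $b_k$ \emph{exactly} for every $k$. This implies the lemma: given $b$ with $\sum_k\abs{b_k}\le 1$, apply the assertion to $(1-\eta)b$ with $\eta=\eta(\varepsilon)>0$ chosen small ($0<\eta<\min(1,\varepsilon)$, say), so that $\sum_k\abs{(1-\eta)b_k}\le 1-\eta$ and $\abs{(1-\eta)b_k-b_k}=\eta\abs{b_k}\le\eta<\varepsilon$, using $\abs{b_k}\le 1$. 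So assume henceforth $\sum_k\abs{b_k}\le 1-\eta$ and aim for exact equality.

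Let $Q$ be the least common multiple of the moduli of $\chi_1,\dots,\chi_n$ and replace each $\chi_k$ by the character modulo $Q$ it induces; this changes nothing on primes coprime to $Q$, so require $P_0>Q$. Since the $\chi_k$ are pairwise non-equivalent, these induced characters are pairwise distinct, hence orthonormal on $(\Z/Q\Z)^{\times}$: $\varphi(Q)^{-1}\sum_{a}\chi_j(a)\overline{\chi_k(a)}=\delta_{jk}$, the sum over reduced residues $a$ modulo $Q$. I would then invoke two standard inputs. (i) Mertens' theorem in arithmetic progressions: for each reduced residue $a$ modulo $Q$,
\[
  \sum_{\substack{P<p\le P^{1+\xi}\\ p\equiv a\ (Q)}}\frac 1 p \;=\; \frac{\log(1+\xi)}{\varphi(Q)}+o(1),\qquad(P\to\infty),
\]
the $o(1)$ being uniform in $a$ since there are only $\varphi(Q)$ classes. (ii) The elementary planar fact that the Minkowski sum of finitely many circles of radii $r_1\ge\cdots\ge r_m>0$ with $r_1\le r_2+\cdots+r_m$ is exactly the closed disk of radius $r_1+\cdots+r_m$ (it is contained in that disk by the triangle inequality, equals it in magnitude by a connectedness argument once the orientations can be made to cancel, i.e.\ precisely under the stated polygon condition); for us the $r$'s will be the numbers $1/p$, each below $1/P$, so that hypothesis holds with enormous room.

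Now the construction. For each reduced residue $a$ modulo $Q$ set
\[
  S_a\;:=\;\frac{\log(1+\xi)}{\varphi(Q)}\sum_{k=1}^n\overline{\chi_k(a)}\,b_k,
\]
so that $\abs{S_a}\le(1-\eta)\varphi(Q)^{-1}\log(1+\xi)$ uniformly in $a$ and in $b$. By (i), for $P>P_0$ large this bound is strictly less than $\sum_{p\equiv a\ (Q)}1/p$ for every one of the finitely many classes at once, and the largest weight $1/p$ is negligible beside that sum; hence by (ii) there are unimodular $\omega(p)$ for the primes $p\equiv a\pmod Q$ in $(P,P^{1+\xi}]$ with $\sum_{p\equiv a}\omega(p)/p=S_a$ exactly. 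Performing this in each residue class defines $\omega$ on all primes of the interval, and then for each $k$, by the orthonormality above,
\[
  \frac{1}{\log(1+\xi)}\sum_{P<p\le P^{1+\xi}}\frac{\omega(p)\,\chi_k(p)}{p}
  \;=\;\frac{1}{\log(1+\xi)}\sum_{a}\chi_k(a)\,S_a
  \;=\;\frac{1}{\varphi(Q)}\sum_{a}\chi_k(a)\sum_{j=1}^n\overline{\chi_j(a)}\,b_j\;=\;b_k,
\]
which is even better than required; undoing the reduction of the first paragraph completes the proof.

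The one spot calling for quantitative care — and where $P_0$ gets pinned down — is arranging that the Mertens estimate and the disk-filling condition hold simultaneously for all $\varphi(Q)$ residue classes and uniformly over every admissible $b$; but with $Q$, $\xi$ and $\eta$ fixed at the start and the bound on $\abs{S_a}$ uniform, this is routine. I do not foresee a serious obstacle: the lemma is a bookkeeping synthesis of Mertens in progressions, character orthogonality, and an elementary planar geometry lemma, the only structural input being the standard fact that pairwise non-equivalent Dirichlet characters remain pairwise distinct (hence orthonormal) when lifted to a common modulus.
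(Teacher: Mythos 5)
Your argument is correct, and it takes a genuinely different route from the paper's. The paper does not pass to a common modulus and uses no geometric filling lemma: it splits $(P,P^{1+\xi}]$ into $n+1$ blocks $(P^{c_{k-1}},P^{c_k}]$ whose lengths on the $\log\log$ scale are $\log(1+\xi)|b_k|$, plus a leftover block of relative size $1-\sum_k|b_k|$ twisted by an auxiliary character non-equivalent to $\chi_1,\dots,\chi_n$; on the $k$-th block it sets $\omega(p)=\overline{\chi_k(p)}\,b_k/|b_k|$ (or $1$ if $b_k=0$) and reads off the conclusion from Mertens' theorem in arithmetic progressions applied to the cross sums $\sum_p \chi_j(p)\overline{\chi_k(p)}/p$: diagonal blocks contribute approximately $b_k\log(1+\xi)$, off-diagonal ones only $o(1)$. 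You instead decompose by residue classes modulo the common modulus $Q$, use orthogonality of the lifted (still pairwise distinct) characters, and solve the problem \emph{exactly} in each class via the elementary annulus/disk-filling lemma, after shrinking the target by a factor $1-\eta$ to create the Mertens margin. Both proofs rest on the same analytic input (Mertens/PNT in progressions, uniform over finitely many classes or characters), and both need $P_0$ to exceed the moduli so that $|\chi_k(p)|=1$ at the relevant primes --- a point you make explicit by requiring $P_0>Q$ and the paper leaves implicit. What your route buys is exactness and a very transparent uniformity of $P_0$ in $b$ (only the fixed $\eta$-shrink and the uniform Mertens estimate over the $\varphi(Q)$ classes enter); what the paper's route buys is avoidance of the common-modulus lifting and of the geometric lemma, at the cost of block endpoints $P^{c_k}$ that depend on $b$, for which the uniformity of the $o(1)$ terms has to be checked.
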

\begin{proof} Define
$$
b_{n+1}:=1-\sum_{k=1}^n \abs{b_k}.
$$The condition \eqref{hury} allows us to divide the interval $$[1,1+\xi] = \bigcup_{j=1}^{n+1}[c_{j-1},c_j]$$  into subintervals such that $c_0=1$ and  
\begin{gather} \label{ere2} \log c_{k}- \log c_{k-1} = \log(1+\xi)\, \abs{b_k}, \qquad (k=1,\ldots,n+1).
\end{gather}
We now let $\chi_{n+1}$ be a Dirichlet character  pairwise non-equivalent to each $\chi_k$ for $k=1,\ldots,n$ and define
\begin{gather} \label{ere2b}
  \omega(p)=\overline{\chi_k(p)}\begin{dcases} \frac{b_k}{\abs{b_k}}, & b_k \neq 0, \\ 1, & b_k=0, \end{dcases}  \qquad  (P^{c_{k-1}} < p \leq  P^{c_k}),
\end{gather}
for each $k=1,\ldots,n+1$. From the prime number theorem for arithmetic progressions we have that
\begin{gather*}
   \sum_{p \leq P} \frac{\chi_k(p) \overline{\chi_j(p)}} p =\delta_{k,j} \log \log P+ C_{k,j}+o(1), \qquad \delta_{k,j}=\begin{cases} 1, & k=j, \\ 0, & \text{otherwise,} \end{cases} 
\end{gather*}
from which it follows together with \eqref{ere2} and \eqref{ere2b} that 
\begin{gather} \label{aj2}
  \abs{\frac 1 {\log(1+\xi)} \sum_{P^{c_{k-1}} < p \leq  P^{c_k}} \frac{\omega(p) \chi_j(p)}p -\delta_{k,j} b_k }<\frac{\varepsilon} {n+1},
\end{gather} 
provided $P$ has been chosen sufficiently large. Our conclusion follows for each $j=1,\ldots,n$ from the inequality \eqref{aj2} for $k=1,\ldots,n+1$ and the triangle inequality.
\end{proof}

\begin{lem} \label{lem6}
  Let $\chi_1,\ldots,\chi_n$ be pairwise non-equivalent Dirichlet characters.   Given any unimodular numbers $|a_p|=1$ for $p \leq N$ and complex constants $C_1,\ldots,C_n$ and $\varepsilon>0$ there exists some $P_0$ such that for any $P \geq P_0$ we can find some unimodular numbers $|\omega(p)|=1$ for primes $p \leq P$ such that $\omega(p)=a_p$ for $p \leq N$ and 
  \begin{gather*}
    \max_{1\leq k \leq n}\abs{ \sum_{p \leq P}\log \p{1-\frac{\omega(p) \chi_k(p)} p} + C_k} <\varepsilon.
  \end{gather*}
  \end{lem}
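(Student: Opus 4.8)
The plan is to reduce to Lemma \ref{lem5}. The point of Lemma \ref{lem5} is that over a dyadic-type block of primes $P < p \le P^{1+\xi}$ we can steer the partial sum $\frac{1}{\log(1+\xi)}\sum \omega(p)\chi_k(p)/p$ to any prescribed vector $(b_k)$ with $\sum|b_k|\le 1$. Here we want to hit prescribed constants $-C_k$ for the full sums $\sum_{p\le P}\log(1-\omega(p)\chi_k(p)/p)$. First I would deal with the small primes $p \le N$: these are forced, $\omega(p)=a_p$, so set $D_k := \sum_{p \le N}\log(1-a_p\chi_k(p)/p)$, a fixed finite vector of constants. It then suffices to choose $\omega(p)$ for $N < p \le P$ so that $\sum_{N<p\le P}\log(1-\omega(p)\chi_k(p)/p)$ is within $\varepsilon$ of $-C_k - D_k$ for every $k$.

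Next I would split the range $(N,P]$ into a bounded number $M$ of consecutive blocks $(Q_{j-1},Q_j]$ with $Q_0$ slightly larger than $N$, $Q_M = P$, each of multiplicative width $Q_j = Q_{j-1}^{1+\xi}$ for a small fixed $\xi>0$ (so $M \asymp \log\log P / \log(1+\xi)$, which grows; I only need $M$ large enough, not bounded). On each block, apply Lemma \ref{lem5}: with $P$, hence each $Q_{j-1}$, large, the contribution of that block to $\sum\log(1-\omega(p)\chi_k(p)/p)$ is, up to an error $O(\sum_{p>Q_{j-1}} 1/p^2) = o(1)$ from expanding the logarithm, equal to $-\sum_{Q_{j-1}<p\le Q_j}\omega(p)\chi_k(p)/p$, which Lemma \ref{lem5} lets me set equal to any vector $-\log(1+\xi)\,(b^{(j)}_k)$ with $\sum_k|b^{(j)}_k|\le 1$, up to $\varepsilon/(2M)$. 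So the total over all blocks can be made to equal any vector $v$ with $\|v\|$ at most $\sum_j \log(1+\xi) = M\log(1+\xi) = \log\log P - \log\log Q_0 \to \infty$; in particular, once $P$ is large enough, $\|{-C_k - D_k}\|_{\ell^1}$ lies well within this ball, so I can distribute $-C_k-D_k$ across the blocks and invoke Lemma \ref{lem5} block by block.

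Assembling: choose $Q_0$ so that the tail errors $\sum_{p>Q_0}1/p^2$ and the truncation of the log series are below $\varepsilon/4$ in total; choose $P_0$ so that for $P\ge P_0$ we have $M\log(1+\xi) > \sum_k|C_k+D_k| + 1$ and so that Lemma \ref{lem5} applies on every block with per-block tolerance $\varepsilon/(4M)$; then the triangle inequality over the small primes, the $M$ blocks, and the logarithm-expansion errors yields $\max_k |\sum_{p\le P}\log(1-\omega(p)\chi_k(p)/p) + C_k| < \varepsilon$. The main obstacle is purely bookkeeping: making sure the partition width $\xi$, the starting point $Q_0$, and the threshold $P_0$ are chosen in the right order so that all three error sources (forced small primes, block approximations, and the $\log(1-x) = -x + O(x^2)$ correction summed over primes) are simultaneously controlled, and verifying that the $\ell^1$-budget $\sum_j\log(1+\xi)$ genuinely exceeds the target $\sum_k|C_k+D_k|$ for large $P$ — which it does since the former diverges like $\log\log P$ while the latter is fixed.
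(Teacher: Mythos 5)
Your reduction to Lemma \ref{lem5} is the right instinct, and the bookkeeping for the forced primes $p\le N$ (absorbing their fixed contribution $D_k$ into the target) and for the $\log(1-x)=-x+O(x^2)$ correction is in principle fine. The genuine gap is the step ``choose $P_0$ so that \dots Lemma \ref{lem5} applies on every block with per-block tolerance $\varepsilon/(4M)$''. In your scheme the number of blocks is $M\asymp(\log\log P-\log\log Q_0)/\log(1+\xi)$, which tends to infinity with $P$, while the lowest blocks sit just above the fixed height $Q_0$. But Lemma \ref{lem5} only guarantees its conclusion for blocks starting above a threshold $P_0(\xi,\eta)$ depending on the tolerance $\eta$ (it comes from the $o(1)$ term in the prime number theorem for arithmetic progressions), and this threshold grows as $\eta\to0$. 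Since your per-block tolerance $\varepsilon/(4M)$ shrinks as $P$ grows while the first blocks stay at $Q_0$, enlarging $P$ makes the requirement harder, not easier; the condition cannot be arranged by taking $P_0$ large. Conversely, with a fixed per-block tolerance the accumulated error over $\sim\log\log P$ blocks diverges. So the assembly as written is circular.

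The underlying problem is that you prescribe the contribution of \emph{every} prime in $(N,P]$ through Lemma \ref{lem5}, even though the target $\sum_k|C_k+D_k|$ is fixed, so the divergent budget $M\log(1+\xi)$ is never needed; what is needed is a default assignment of $\omega(p)$ on the bulk of the range whose contribution is automatically under control (note that a naive default such as $\omega(p)\equiv1$ fails if some $\chi_k$ is principal, and $\omega(p)=\overline{\chi_j(p)}$ fails for $k=j$). The paper's proof supplies exactly this: it fixes an auxiliary character $\chi$ non-equivalent to all the $\chi_k$ and sets $\omega(p)=\chi(p)$ by default, so that $\sum_{p\le P}\log\bigl(1-\chi_k(p)\chi(p)/p\bigr)$ converges to a constant $-D_k$ for every $k$ simultaneously; the remaining correction is then split into a \emph{fixed} number $M$ of pieces $E_k$ with $|E_k|<\log 2$, realized by Lemma \ref{lem5} with $\xi=1$ on $M$ doubling blocks at the top of the range. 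Because $M$ is fixed before the threshold $P_1$ is chosen and all blocks lie above $P_1$, the per-block tolerance $\varepsilon/(3M)$ is a fixed quantity and the circularity disappears. To rescue your ``cover the whole range'' version you would need per-block tolerances decaying in the block index together with a quantitative form of Lemma \ref{lem5} showing its thresholds are compatible with the doubly exponentially growing block heights — information the black-box statement does not provide. (A smaller slip: you first place $Q_0$ ``slightly larger than $N$'' and later need $\sum_{p>Q_0}1/p^2<\varepsilon/4$; since $N$ is given, you must take $Q_0$ large and assign $\omega(p)$ in some fixed way on $N<p\le Q_0$, absorbing that finite sum into $D_k$.)
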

\begin{proof} 
   Let $\chi$ be a Dirichlet character pairwise non-equivalent to $\chi_k$ for each $1 \leq k \leq n$. From the prime number theorem for arithmetic progressions we have
     \begin{gather*}
   -\sum_{p<P} \log\p{1-\frac{\chi_k(p) \chi(p)} p} =D_k+o(1).
   \end{gather*}
   Let us now define 
   $$
    E_k:=\frac 1 M \p{C_k+D_k+\sum_{p \leq N}\p{ \log\p{1-\frac{\chi_k(p) \chi(p)} p} -  \log\p{1-\frac{\chi_k(p)a_p} p}}},
   $$
   where $M \in \Z^+$ is sufficiently large so that $|E_k|<\log 2$ for $k=1,\ldots,n$. Let  $P_1$ be sufficiently large such  that if $P\geq P_1$ then
      \begin{gather}\label{uri}
   \max_{1\leq k \leq n} \abs{\sum_{p \leq P} \log \p{1-\frac{\chi_k(p) \chi(p)} p} +D_k}<\frac {\varepsilon} 3,  \\ \intertext{and}
      \max_{|\omega(p)|=1} \max_{1\leq k \leq n} \sum_{p>P_1} \abs{\log \p{1-\frac{\omega(p)\chi_k(p)} p}   + \frac{\omega(p)\chi_k(p)} p }<\frac{\varepsilon} 3, \label{uri2}
      \end{gather}
   and such that for each $Q \geq P_1$ we can use Lemma \ref{lem5} with $\xi=1$  to define $|\omega(p)|=1$ for $Q<p \leq Q^2$ such that
   \begin{gather} \label{ure}
       \max_{1\leq k \leq n}\abs{\sum_{Q<p \leq Q^2}\frac{\omega(p) \chi_k(p)} p -E_k} <\frac{\varepsilon}{3M}.
   \end{gather}
   By defining $\omega(p)=a_p$ for $p \leq N$ and $\omega(p)=\chi(p)$ for $N<p \leq Q$ and by \eqref{ure} for $Q<p \leq Q^2$ for  $Q=P^{2^j}$ for each $j=0,\ldots,M-1$ then the conclusion of the lemma follows with $P_0=P_1^{2^M}$ by the inequalities \eqref{uri}, \eqref{uri2}, \eqref{ure} and the triangle inequality. 
    \end{proof}

 \noindent {\em Proof of Lemma \ref{Le2}.} 
 Choose\footnote{The convergence is well-known for ``almost all'' $\omega_0$ in a suitable sense. By Carleson's theorem we may even choose $\omega_0(p)=e^{2 \pi i p x}$ for almost all $0\leq x \leq 1$} $|\omega_0(p)|=1$ such that the Dirichlet series
 \begin{gather*}
   A_k(s)=\sum_p \frac{\omega_0(p) \chi_k(p)} {p^s}
 \end{gather*}  
  are convergent to analytic functions on the half plane $\Re(s)>\frac 1 2$ for each $k=1,\ldots,n$. Choose $P_0$ and $\delta_1$ sufficiently small such that $1+\delta_1 K \subset \{s \in \C: \Re(s)>3/4\}$ and such that
  \begin{gather} \label{oo3}
     \sup_{P \geq P_0} \max_{0 \leq \delta \leq \delta_1} \max_{s \in K} \sum_{s \in K} \abs{\sum_{p > P} \frac{\omega_0(p) \chi_k(p)}{p^{1+\delta s}}}<\frac{\varepsilon} 9.
  \end{gather}
By Lemma \ref{lem6} we may find some $P_1\geq P_0$ and $|\omega(p)|=1$ for $p<P_1$ such that
\begin{gather} \label{iia0}
\max_{1 \leq k \leq n}\abs{ 
\sum_{p \leq P_1} \log\p{1-\frac{\omega(p)\chi_k(p)}{ p}} +C_k}  <\frac{\varepsilon} 9, 
\end{gather}
and such that \begin{gather}
 \label{ajaj}  \max_{s \in K} \max_{1 \leq k \leq n} \max_{|\omega(p)|=1}
\sum_{p > P_1}  \abs{\log\p{1-\frac{\omega(p)\chi_k(p)}{ p}}  + \frac{\omega(p)\chi_k(p)}{ p} }  
  <\frac{\varepsilon} 9,
\end{gather}
Let us now choose $B>0$ such that
\begin{gather} \label{ia0}
\max_{1 \leq k \leq n} \max_{s \in K} \abs{\int_0^B g_k(x) e^{-sx} dx-f_k(s)}<\frac \varepsilon 9,
\end{gather}
and  $M \in \Z^+$  sufficiently large 
such that
\begin{gather} \label{in1}
 \max_{1 \leq k \leq n} \max_{\substack{|x-y| \leq B/M \\ 0  \leq x,y \leq B}} \max_{s \in K} \abs{e^{-sx} g_k(x)-e^{-sy}g_k(y)} <\frac{\varepsilon} {9B}.
 \end{gather}  
From  \eqref{in1} it follows that the Riemann integral may be estimated by a Riemann sum
\begin{gather} \label{ia1}
\max_{1 \leq k \leq n} \max_{s \in \C} \abs{\sum_{m=1}^M g\p{\frac{mB}M} e^{-smB/M} \frac B M   - \int_0^B g_k(x) e^{-sx} dx}<\frac \varepsilon 9.
  \end{gather}
For a given $\delta$, define 
\begin{gather} \label{Pdef}
 P_2:=\exp \p{\frac B {M \delta}}, \qquad P_3:=\exp\p{\frac {B(M+1)} {M \delta}}. 
\end{gather} 
Now assume that $0<\delta_0 \leq \delta_1$ is sufficiently small so that  if $0 < \delta \leq \delta_0$ then \begin{gather} \label{uio}
\max_{s \in K} \max_{1 \leq k \leq n} 
\sum_{p \leq P_1}  \abs{\log\p{1-\frac{\omega(p)\chi_k(p)}p}   - \log \p{1- \frac{\omega(p)\chi_k(p)}{ p^{1+\delta s} }}}  <\frac{\varepsilon} 9,
\end{gather}
and such that $P_2$ defined by \eqref{Pdef} is sufficiently large so that $P_2 \geq P_1$ and that we may for each $m=1,\ldots,M$ apply Lemma  \ref{lem5} with  $\xi=\frac 1 m$ and $P=P_2^m =\exp(mB/M \delta)$ so that
\begin{gather} \label{ia3}
  \max_{1\leq k \leq n}\abs{ \sum_{mB/M <  \delta \log p \leq (m+1)B/M}\frac{\omega(p) \chi_k(p)} p -g\p{\frac{mB}M} \frac B M} <\frac{\varepsilon}{9M},
\end{gather}
for some $|\omega(p)|=1$ defined for $mB/M \leq \delta \log p<(m+1)B/M$. 
By   the inequalities  \eqref{ia0}, \eqref{ia1} and \eqref{ia3} for each $m=1,\ldots,M$ and the triangle inequality it follows that
\begin{gather} \label{ia4}
         \max_{s \in K} \max_{1\leq k \leq n}\abs{\sum_{P_2 <   p \leq P_3}\frac{\omega(p)\chi_k(p)} {p^{1+\delta s}} -f_k(s)} <\frac{3\varepsilon}{9}.
\end{gather}
By defining  $\omega(p):=\omega_0(p)$ when $P_1<p\leq P_2$ and when $p>P_3$  it follows that 
\begin{gather} \label{ia5}
       \max_{s \in K} \max_{1\leq k \leq n}\abs{\sum_{P_1 <   p \leq P_2}\frac{\omega(p)\chi_k(p)} {p^{1+\delta s}}} <\frac{2\varepsilon}{9}, \qquad
  \max_{s \in K} \max_{1\leq k \leq n}\abs{\sum_{P_3 <   p}\frac{\omega(p)\chi_k(p)} {p^{1+\delta s}}} <\frac{\varepsilon}{9},
\end{gather}
         by applying the inequality \eqref{oo3}  twice (combined with the triangle inequality) and once respectively. The conclusion of our lemma follows by the inequalities \eqref{iia0}, \eqref{ajaj}, \eqref{uio},  \eqref{ia4}, \eqref{ia5} and the triangle inequality.
\qed

\section{The irrational case - Proof of Lemma \ref{LA} \label{sec5}}

The next lemma is a slightly sharper version of Cassels \cite[Lemma, p. 177]{Cassels}.
\begin{lem} \label{cassel} Let $\alpha>0$ be an algebraic irrational number and  
  let
 $$
   A=\{n \in \Z^+: \log(n+\alpha) \not \in \operatorname{Span}_\Q \{\log(k+\alpha):0\leq k \leq n-1\} \}.
 $$
 Then there exists  some positive sequence  $M_N$  with $M_N=o(N)$ such that for each $N \in \Z^+$ at least $51 \%$ of the integers $n$ in the interval $N<n \leq N+M_N$ belong to the set $A$. 
 \end{lem}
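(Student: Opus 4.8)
The plan is to translate the $\Q$-linear-independence condition defining $A$ into a statement about the largest prime factor of a fixed polynomial, and then to settle the resulting counting problem by a moment computation. Write $K=\Q(\alpha)$ and $d=[K:\Q]$; since $\alpha$ is algebraic \emph{irrational} we have $d\ge 2$, and this is the only place the algebraicity hypothesis is essential. Let $a_0\ge 1$ be the leading coefficient of the primitive minimal polynomial $g$ of $\alpha$, so that $a_0(n+\alpha)$ is an algebraic integer, and put
\[
  F(n):=\abs{N_{K/\Q}\p{a_0(n+\alpha)}}=a_0^{\,d-1}\abs{g(-n)},
\]
a polynomial in $n$ of degree $d$ taking positive integer values for $n$ large, with $F(n)=(a_0n)^{d}\p{1+O(1/n)}$. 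I claim that if $P^{+}(F(n))>n$ (where $P^{+}$ denotes the largest prime factor) then $n\in A$. Indeed, $n\notin A$ means $\log(n+\alpha)=\sum_{k<n}q_k\log(k+\alpha)$ for some $q_k\in\Q$; clearing denominators and exponentiating gives $(n+\alpha)^{b}=\prod_{k<n}(k+\alpha)^{b_k}$ in $K$ with $b\in\Z^{+}$, $b_k\in\Z$, and evaluating the valuation $v_{\mathfrak p}$ at any prime ideal $\mathfrak p$ of $\mathcal O_K$ (outside the finitely many dividing the denominator of $\alpha$, all lying over divisors of $a_0$) shows every such $\mathfrak p$ with $v_{\mathfrak p}(n+\alpha)>0$ must divide some $k+\alpha$ with $k<n$. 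But if $p:=P^{+}(F(n))>n$ then $p\nmid a_0$ (automatic once $n\ge a_0$), and a prime $\mathfrak p\mid p$ with $\mathfrak p\mid(n+\alpha)$ cannot divide any $k+\alpha$, $0\le k<n$: that would give $\mathfrak p\mid(n-k)$, hence $p\mid(n-k)$, impossible since $0<n-k\le n<p$. So it suffices to show: for a suitable $M_N=o(N)$ and all large $N$,
\[
  \#\left\{\,n\in(N,N+M_N]\;:\;P^{+}(F(n))>n\,\right\}\;\ge\;0.51\,M_N.
\]

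That such a bound should hold with room to spare is explained by the size $F(n)\asymp n^{d}$ with $d\ge 2$: heuristically the density of $n$ for which $F(n)$ is roughly $n$-smooth is the Dickman value $\rho(d)\le\rho(2)=1-\log2=0.3068\ldots$, so a proportion $\ge 1-\rho(d)\ge\log2>0.51$ of $n$ should have $P^{+}(F(n))>n$, with the margin growing in $d$. To make this rigorous in short windows I would bound the moments of $\omega_n:=\#\{\,p\text{ prime}:p\mid F(n),\ p>n\,\}$ over $n\in(N,N+M_N]$. Writing $\varrho(q)$ for the number of $n\bmod q$ with $q\mid F(n)$, the prime ideal theorem for $K$ (equivalently Chebotarev for $g$) gives
\[
  \frac{1}{M_N}\sum_{n}\omega_n=\sum_{N<p\le CN^{d}}\frac{\varrho(p)}{p}+o(1)=\log d+o(1),
\]
and similarly $M_N^{-1}\sum_n\omega_n(\omega_n-1)\le(\log d)^{2}+o(1)$. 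For $d\ge 3$, Paley--Zygmund then yields
\[
  \#\{n\in(N,N+M_N]:\omega_n\ge 1\}\;\ge\;\frac{\bigl(\sum_n\omega_n\bigr)^{2}}{\sum_n\omega_n^{2}}\;\ge\;\frac{\log d}{1+\log d}\,M_N+o(M_N)\;\ge\;\tfrac{\log3}{1+\log3}\,M_N+o(M_N),
\]
which exceeds $0.51\,M_N$; for $d=2$ one uses instead that $F(n)\asymp n^{2}$ forces $\omega_n\le 1$ apart from $o(M_N)$ values of $n$ (two prime factors exceeding $n$ would have product $>n^{2}\gtrsim F(n)$, so $F(n)$ would be essentially a product of two primes of size $\asymp n$, a set of size $\ll M_N/\log N$), whence $\#\{\omega_n\ge1\}=(\log2+o(1))M_N>0.51\,M_N$. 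Alternatively, invoking the Dickman-type upper bound for smooth values of $F$ in a short interval gives the cleaner $(1-\rho(d)+o(1))M_N\ge(\log2-o(1))M_N$, uniformly in $d$.

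The hard part is the rigor of the moment estimates \emph{in the short window}. The primes $p\mid F(n)$ range up to $\asymp N^{d}$ while $(N,N+M_N]$ has length $o(N)$, so one cannot replace $\#\{n\in(N,N+M_N]:q\mid F(n)\}$ by its expected value $M_N\varrho(q)/q$ using the trivial bound $\varrho(q)(M_N/q+1)$, whose error $\sum_q\varrho(q)$ swamps $M_N$; one genuinely needs equidistribution of the roots of the congruence $F(n)\equiv0\pmod q$, on average over $q$. For $d=2$ this is classical (Hooley; Duke--Friedlander--Iwaniec), and for general $d$ the required averaged statement follows from the theory of roots of polynomial congruences together with large-sieve and Weil bounds; the very large primes $p>N^{1+\eta}$ are handled separately using that $F(n)\le CN^{d}$ has at most $d$ prime factors exceeding $N$. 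One must also take $M_N$ large enough (e.g.\ $M_N=N/\log N$) for these inputs to apply while keeping $M_N=o(N)$. Assembling these pieces yields the stated $51\%$, with roughly $1-\log2$ worth of slack.
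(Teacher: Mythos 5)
Your arithmetic reduction is sound, and it is exactly the mechanism behind Cassels' lemma (which is what the paper invokes): if $p=P^{+}(F(n))>n$ and $p\nmid a_0$, then a prime ideal of $\Q(\alpha)$ above $p$ dividing $n+\alpha$ cannot divide any $k+\alpha$ with $0\le k<n$, and the valuation argument places $n$ in $A$. The genuine gap is the counting step, and it cannot be discharged by the tools you cite. Your first- and second-moment asymptotics for $\omega_n$ over $(N,N+M_N]$ require, for primes $p\asymp N^{a}$ with $1<a\le d$, an asymptotic count of roots of $F\equiv 0\pmod p$ falling in an interval of length $M_N=o(N)$, i.e.\ in a normalized interval of length about $p^{-(a-1)/a}$; even on average over $p$ this is far beyond Hooley/Duke--Friedlander--Iwaniec-type equidistribution (which saves only a small power and exists essentially only for $d=2$), and for $p$ near $N^{d}$ the asymptotic you need amounts to predicting how often $F(n)$ is a bounded multiple of a prime in a short interval --- a Hardy--Littlewood-type problem open already for $n^{2}+1$ on all of $[1,N]$. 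The fallback is no better: a Dickman-strength bound $(\rho(d)+o(1))M_N$ for $n$-smooth values of an irreducible polynomial of degree $\ge 2$ is not known even in long intervals. Nor can Paley--Zygmund be rescued trivially: using only $\omega_n\le d$ gives the proportion $\log d/d$, which never reaches $0.51$, and your $d=2$ variant still consumes the full first-moment asymptotic. So the $51\%$ claim rests on unproved statements.

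What makes the lemma provable --- and what Cassels' method, followed by the paper, actually does --- is that the large primes are never counted asymptotically at all. One computes $\sum_{n\in(N,N+M_N]}\log F(n)=dM_N\log N+O(M_N)$ and bounds from above the contribution of all prime powers with $p\le N+M_N$ by $M_N\sum_{p\le 2N}\varrho(p)\log p/p$ plus a per-prime error of size $O(\log N)$, giving $M_N\log N+O(M_N)+O(N)$: only the average $\sum_{p\le x}\varrho(p)\log p/p=\log x+O(1)$ is used, no information about where the roots sit inside the window, and the total error $O(N)$ is harmless exactly when $M_N\log N/N\to\infty$ --- which is why the paper takes $M_N=N(\log N)^{-\xi}$ with $0<\xi<1$ and no smaller. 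Hence the primes $p>N+M_N\ge n$ carry at least $(d-1-o(1))M_N\log N$ of the logarithmic mass, and since each $F(n)\le CN^{d}$ can absorb at most $d\log N+O(1)$ of it, at least $\tfrac{d-1}{d}(1-o(1))M_N$ of the $n$ in the window lie in $A$. This already beats $51\%$ for $d\ge 3$; for $d=2$ one must additionally show that only a small proportion of $n$ have essentially all of $\log F(n)$ concentrated in one prime factor of size near $N^{2}$, and that needs only an upper-bound sieve in the window, not an asymptotic. I would rebuild your counting step along these lines rather than via moments of $\omega_n$.
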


\begin{proof} Cassels use $M=M_N=10^{-6}N$ for a sufficiently large $N$, but it is clear that his proof method applies also for the intervals $[N,N+M_N]$ for some $M_N=o(N)$. In particular it follows from the arguments in \cite[p. 183]{Cassels} that we may choose $M_N= N (\log N)^{-\xi}$ for $0<\xi<1$ and $N$ sufficiently large. \end{proof}

\noindent {\em Proof of Lemma \ref{LA}.}
By Lemma \ref{LA3} we have that for any $B\geq B_0$ there
there exist some function $g \in C[0,B]$  such that
\begin{gather} \label{ii} \max_{s \in K} \abs{\int_{0}^{B} g(x) e^{-sx} dx - f(s)}<\frac{\varepsilon} 3.
\end{gather}
Define $\omega(\alpha):=1$ and for a given $\delta>0$ define 
\begin{gather}  \label{omegadef}
 \omega(n+\alpha):=\exp\left(i \arg  \left( \int_0^{\delta \log(n+\alpha)}  g(x)dx -\sum_{k=0}^{n-1}  \frac{\omega(k+\alpha)} {k+\alpha}  \right) \right),
\end{gather}
recursively for the integers $n \geq 1$  such that \begin{gather} \label{cond} \log(n+\alpha) \not \in \operatorname {Span}_\Q\{\log(k+\alpha):0 \leq	k \leq n-1\},
\end{gather}
and define $\omega(n+\alpha)$ such that  \eqref{cond2}, \eqref{cond3} is satisfied otherwise. If \eqref{cond} does not hold then $\alpha$ must be algebraic, and then \eqref{cond2} must hold for some integers $b_k$ with $b_n  \neq 0$, and $\omega(n+\alpha)$ is defined by \eqref{cond3}.

For any $\varepsilon, \varepsilon_2>0$ it follows by Lemma \ref{cassel} that if $\delta>0$ is sufficiently small then for $N_1:=\exp(\delta^{-\frac 1 2})$ we have
\begin{gather}
  \label{h1}
   \max_{s \in K} \abs{\sum_{0 \leq n< N_1}    \frac{\omega(n+\alpha)} {(n+\alpha)^{1+\delta s}}} <\frac \varepsilon 3,  \\ \intertext{and}  
 \label{iii}
   \abs{\sum_{N_1 \leq n< \exp(X\delta^{-1})}  \frac{\omega(n+\alpha)} {n+\alpha} - \int_{\sqrt \delta}^X g(x)dx}<\varepsilon_2,   \qquad (\sqrt \delta \leq  X \leq B). 
\end{gather}
From the inequality \eqref{iii} and partial summation/partial integration we have since $K$ is compact that
\begin{gather}  \label{iv}
  \max_{s \in K}  \abs{\sum_{N_1\leq n <N}  \frac{\omega(n+\alpha)} {(n+\alpha)^{1+\delta s}} - \int_{\sqrt \delta}^B g(x) e^{-sx} dx}<\frac \varepsilon  3, 
\end{gather}
provided $\varepsilon_2$ has been chosen sufficiently small. Finally our lemma follows
with
\begin{gather*}
N:=\exp(B \delta^{-1})
\end{gather*}
from the inequalities  \eqref{ii}, \eqref{h1},  \eqref{iv} and the triangle inequality.  Since this is construction works for any $B \geq B_0$ we may thus choose $N_0=\exp(B_0 \delta^{-1})$. \qed

\section{Proof of Theorem \ref{th5}}
The proof of Theorem \ref{th5} is identical to the algebraic irrational case of the proof of  Theorem \ref{th}, but we need to replace Lemma \ref{LA} by
\begin{lem} 
 For any $\lambda \in \R$, irrational parameter $\alpha>0$, compact set $K$  and function  $f$,  satisfying the conditions of Theorem \ref{th} there exists  for any  $\varepsilon>0$ some  $\delta_0>0$ such that for any $0<\delta \leq \delta_0$ there exists  some $N_0$ such that for any $N \geq N_0$ there exists some completely multiplicative unimodular function $\omega:\{\alpha,\ldots,N+\alpha\} \to \C$ such that
 \begin{gather} \notag
    \max_{s \in K} \abs{\sum_{n=0}^N  \frac{\omega(n+\alpha)\exp \p{2 \pi i \lambda n}} {(n+\alpha)^{1+\delta s}} -f(s)} <\varepsilon.
\end{gather}
 \end{lem}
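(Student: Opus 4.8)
The plan is to mimic the proof of Lemma \ref{LA}, folding the unimodular twist $e^{2\pi i\lambda n}$ directly into the construction of $\omega$. As there, I would first use Lemma \ref{LA3} to fix, for each $B\geq B_0$, a function $g\in C[0,B]$ with $\max_{s\in K}\abs{\int_0^B g(x)e^{-sx}\,dx-f(s)}<\varepsilon/3$. Then set $\omega(\alpha):=1$ and, for a given $\delta>0$, define recursively for the integers $n\geq1$ with $\log(n+\alpha)\notin\operatorname{Span}_\Q\{\log(k+\alpha):0\leq k\leq n-1\}$
\[
\omega(n+\alpha):=e^{-2\pi i\lambda n}\exp\p{i\arg\p{\int_0^{\delta\log(n+\alpha)}g(x)\,dx-\sum_{k=0}^{n-1}\frac{\omega(k+\alpha)e^{2\pi i\lambda k}}{k+\alpha}}},
\]
and define $\omega(n+\alpha)$ through \eqref{cond3} at the remaining indices (where necessarily $\alpha$ is algebraic and \eqref{cond2} holds with $b_n\neq0$), exactly as in the proof of Lemma \ref{LA}.

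The purpose of the factor $e^{-2\pi i\lambda n}$ is that the auxiliary function $\tilde\omega(n+\alpha):=\omega(n+\alpha)e^{2\pi i\lambda n}$ is then unimodular, satisfies $\tilde\omega(\alpha)=1$, obeys at every recursively-defined index precisely the recursion \eqref{omegadef} of Lemma \ref{LA} with $\tilde\omega$ in place of $\omega$, and is arbitrary at the (sparse) dependent indices --- just as $\omega$ is in Lemma \ref{LA}. Since the sum to be estimated equals $\sum_{n=0}^N\tilde\omega(n+\alpha)(n+\alpha)^{-1-\delta s}$, the entire analysis in the proof of Lemma \ref{LA} applies verbatim to $\tilde\omega$: for algebraic irrational $\alpha$ one invokes Lemma \ref{cassel} to obtain, with $N_1:=\exp(\delta^{-1/2})$ and $\delta$ small, the analogues of \eqref{h1} and \eqref{iii}; partial summation/partial integration gives the analogue of \eqref{iv}; and with $N:=\exp(B\delta^{-1})$ the triangle inequality combined with \eqref{ii}, \eqref{h1} and \eqref{iv} yields $\max_{s\in K}\abs{\sum_{n=0}^N\tilde\omega(n+\alpha)(n+\alpha)^{-1-\delta s}-f(s)}<\varepsilon$, the desired bound, with $N_0:=\exp(B_0\delta^{-1})$ since $B\geq B_0$ was arbitrary. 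For transcendental $\alpha$ there are no nontrivial relations, $\omega$ and $\tilde\omega$ differ only by the twist, the set $A$ of Lemma \ref{cassel} is all of $\Z^+$, and the argument only simplifies (here, as in Theorem \ref{th5}, $K\subset\{\Re(s)>0\}$, which is all the partial summation requires).

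It remains to confirm that $\omega$ itself is a completely multiplicative unimodular function, i.e.\ that \eqref{cond2} implies \eqref{cond3}; this is the same verification as in Lemma \ref{LA}, since \eqref{cond2} and \eqref{cond3} involve only the numbers $n+\alpha$ and the values $\omega(n+\alpha)$, so the twist factors $e^{2\pi i\lambda k}$ appearing in the recursion are merely part of those fixed unimodular values and play no role. As in Cassels, one uses that the leading index of any nontrivial relation \eqref{cond2} is a dependent index, where $\omega$ was defined to respect \eqref{cond3}, and that the remaining relations reduce to these. I do not expect a genuine obstacle; the only point requiring care is that $\omega$ must be built directly (so as to be completely multiplicative unimodular), with $\tilde\omega=\omega\,e^{2\pi i\lambda n}$ emerging as the object that tracks the Laplace transform $\int_0^{\delta\log(n+\alpha)}g$ --- one cannot instead take a completely multiplicative unimodular $\tilde\omega$ from Lemma \ref{LA} and set $\omega:=e^{-2\pi i\lambda n}\tilde\omega$, since for irrational $\lambda$ that $\omega$ generally violates \eqref{cond3}. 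When $\lambda$ is rational one could alternatively reduce to Dirichlet $L$-functions as in \cite[Section 6.2]{GaLa}, but the construction above covers all $\lambda\in\R$ uniformly.
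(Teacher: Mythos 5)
Your construction is exactly the paper's: it modifies the recursion \eqref{omegadef} by the factor $e^{-2\pi i\lambda n}$ (with the twisted sum $\sum_k \omega(k+\alpha)e^{2\pi i\lambda k}/(k+\alpha)$ inside the argument), keeps the definition via \eqref{cond2}--\eqref{cond3} at the dependent indices, and then runs the proof of Lemma \ref{LA} verbatim on the twisted function, i.e.\ with $\omega(n+\alpha)$ replaced by $\omega(n+\alpha)e^{2\pi i\lambda n}$ in \eqref{h1}, \eqref{iii}, \eqref{iv}. This matches the paper's proof, with your remarks on complete multiplicativity and on why the twist must be folded into the recursion being correct added detail.
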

 The proof of Lemma \ref{LA3} is identical to the proof of Lemma \ref{LA}, except that we replace \eqref{omegadef} by
\begin{gather*}  
 \omega(n+\alpha):=\exp \p{-2 \pi i \lambda n + i \arg  \left( \int_0^{\delta \log(n+\alpha)}  g(x)dx -\sum_{k=0}^{n-1}  \frac{\omega(k+\alpha)\exp \p{2 \pi i \lambda k}} {k+\alpha}  \right) },
\end{gather*}
and replace $\omega(n+\alpha)$ by $\omega(n+\alpha)\exp \p{2 \pi i \lambda n}$ in the inequalities \eqref{h1}, \eqref{iii}, \eqref{iv}. \qed

\bibliographystyle{plain}
 \end{document}